\sloppy \pagestyle{plain}
\numberwithin{equation}{section}
\newtheorem{theorem}[subsection]{Theorem}
\newtheorem*{theorem*}{Theorem}
\newtheorem{lemma}[subsection]{Lemma}
\newtheorem{corollary}[subsection]{Corollary}
\newtheorem{proposition}[subsection]{Proposition}
\theoremstyle{definition}
\newtheorem{example}[subsection]{Example}
\newtheorem{definition}[subsection]{Definition}
\newtheorem*{definition*}{Definition}
\theoremstyle{remark}
\newtheorem{remark}[subsection]{Remark}
\makeatletter\@addtoreset{equation}{section}
\renewcommand\labelenumi{(\arabic{enumi})}
\newcommand{\FFF}{\mathbf{F}}
\newcommand{\KKK}{\mathbf{K}}
\newcommand{\LLL}{\mathbf{L}}
\newcommand{\PP}{\mathbb{P}}
\newcommand{\ZZ}{\mathbb{Z}}
\newcommand{\cC}{\mathcal{C}}
\newcommand{\Aut}{\operatorname{Aut}}
\newcommand{\GL}{\operatorname{GL}}
\newcommand{\SL}{\operatorname{SL}}
\newcommand{\PGL}{\operatorname{PGL}}
\newcommand{\PSL}{\operatorname{PSL}}
\newcommand{\rk}{\operatorname{rk}}
\newcommand{\Pic}{\operatorname{Pic}}
\newcommand{\Gal}{\operatorname{Gal}}
\newcommand{\Cr}{\operatorname{Cr}}
\newcommand{\WE}{\mathrm{W}(\mathrm{E}_8)}
\newcommand{\lcm}{\operatorname{lcm}}
\def \ge {\geqslant}
\def \le {\leqslant}
\title{Jordan property for Cremona group over a finite field}
\author{Yuri Prokhorov}
\author{Constantin Shramov}
\address{\emph{Yuri Prokhorov}
\newline
\textnormal{Steklov Mathematical Institute of RAS,
8 Gubkina street, Moscow 119991, Russia.}
\newline
\textnormal{HSE University, Russian Federation,
Laboratory of Algebraic Geometry, 6 Usacheva str., Moscow, 119048, Russia.}
\newline
\textnormal{\texttt{prokhoro@mi-ras.ru}}}
\address{\emph{Constantin Shramov}
\newline
\textnormal{Steklov Mathematical Institute of RAS,
8 Gubkina street, Moscow 119991, Russia.}
\newline
\textnormal{HSE University, Russian Federation,
Laboratory of Algebraic Geometry, 6 Usacheva str., Moscow, 119048, Russia.}
\newline
\textnormal{\texttt{costya.shramov@gmail.com}}}
\thanks{This work is supported by the Russian Science Foundation under grant \textnumero 18-11-00121.}
\date{}
\begin{document}

\begin{abstract}
We show that the Cremona group of rank $2$ over a finite field is Jordan, and provide an upper bound for its Jordan constant
which is sharp when the number of elements in the field is different from~$2$, $4$, and~$8$.
\end{abstract}

\maketitle
\tableofcontents

\section{Introduction}

Groups of geometric origin, such as automorphism and birational automorphism groups of algebraic varieties, often have complicated structure. In particular,
even the finite subgroups therein may be hard to classify explicitly. To provide an approach to a description of such subgroups, V.\,Popov
introduced the following notion, inspired by the classical result of C.\,Jordan on finite subgroups of the general linear group \cite[\S40]{Jordan}
and the later work of J.-P.\,Serre~\cite{Serre-2009}.

\begin{definition}[{\cite[Definition~2.1]{Popov11}}]
\label{definition:Jordan}
A group~$\Gamma$ is called \emph{Jordan}
(alternatively, one says that~$\Gamma$ has \emph{Jordan property}),
if there exists a constant $J=J(\Gamma)$ such that any
finite subgroup of~$\Gamma$
contains a normal abelian subgroup of index at most~$J$.
\end{definition}

One of the most popular groups in birational geometry is the \emph{Cremona group of rank~$2$}, or just the \emph{Cremona group},
$\Cr_2(\KKK)$ over a field $\KKK$, which is the group of birational selfmaps of the projective plane over $\KKK$.
J.-P.\,Serre proved in \mbox{\cite[Theorem~5.3]{Serre-2009}} that
every finite subgroup of $\Cr_2(\KKK)$ of order coprime to the characteristic of $\KKK$
contains a normal abelian subgroup whose index is bounded by some universal constant.
In particular, the Cremona group over a field of characteristic zero is Jordan.

On the other hand, it is easy to see that Jordan property fails for the Cremona group
over an algebraically closed field $\KKK$ of positive characteristic~$p$, and even for the much smaller
group $\PGL_2(\KKK)$. Indeed, the latter group contains arbitrarily large finite simple subgroups
$\PSL_2(\FFF_{p^r})$, where $\FFF_{p^r}$ is the field of~$p^r$ elements.
In general, it is known that for groups of automorphisms and birational automorphisms of algebraic varieties
over a field of characteristic $p>0$, the so-called $p$-Jordan property makes more sense than
the usual Jordan property. We refer the reader
to \cite{BrauerFeit}, \cite{LarsenPink}, \cite{Hu}, \cite{ChenShramov}, and~\cite{Kuznetsova}
for further results in this direction.

However, the above argument for the failure
of Jordan property does not apply for finite fields. The goal of this paper is to show that
the Cremona group of rank $2$ over such a field is indeed Jordan.

Let $p$ be a prime number, let $k$ be a positive integer, and let $\FFF_q$ denote the finite field of $q$ elements, where
$q=p^k$. Set
$$
|\WE|=2^{14}\cdot 3^5\cdot 5^2\cdot 7=696\,729\,600,
$$
so that $|\WE|$ is the order of the Weyl group~$\WE$.
We are going to prove

\begin{theorem}\label{theorem:main}
Every finite subgroup of the Cremona group $\Cr_2(\FFF_q)$ contains a normal abelian
subgroup of index at most $|\WE|$ if $q\in\{2,4,8\}$, and of index at most~\mbox{$q^3(q^2-1)(q^3-1)$}
otherwise; in the latter case, this bound is sharp. In particular, the group $\Cr_2(\FFF_q)$ is Jordan.
\end{theorem}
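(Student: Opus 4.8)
The plan is to combine the $G$-equivariant minimal model program for surfaces with a case analysis of the automorphism groups of the resulting minimal models. Since a finite field is perfect, any finite subgroup $G\subset\Cr_2(\FFF_q)$ can be regularized: resolving the indeterminacy of its elements equivariantly, one realizes $G$ as a group of $\FFF_q$-automorphisms of a smooth projective (geometrically rational) surface $S$ over $\FFF_q$. Running the $G$-equivariant minimal model program, I may assume that $S$ is $G$-minimal, so that $S$ is either a del Pezzo surface with $\rk\Pic(S)^G=1$ or a conic bundle $S\to\PP^1$ with $\rk\Pic(S)^G=2$. Because $\FFF_q$ is finite, the full group $\Aut(S)$ of $\FFF_q$-automorphisms is finite; and if $A\trianglelefteq\Aut(S)$ is abelian of index at most $J$, then $A\cap G$ is a normal abelian subgroup of $G$ of index at most $J$ as well. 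Hence it suffices to bound, uniformly over all del Pezzo and conic bundle models, the minimal index $j(\Aut(S))$ of a normal abelian subgroup of $\Aut(S)$; the Jordan constant of $\Cr_2(\FFF_q)$ is then at most the supremum of these bounds.

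For a del Pezzo model I stratify by the degree $d=K_S^2$. The action of $\Aut(S)$ on the geometric Picard lattice $\Pic(S_{\overline{\FFF_q}})$ preserves both the intersection form and the canonical class, giving a homomorphism to the Weyl group $\mathrm{W}(\mathrm{E}_{9-d})$ for $d\le 6$, whose image has order at most $|\WE|$. Its kernel consists of automorphisms acting trivially on the Picard group; for $d\le 5$ this kernel is trivial, since $9-d\ge 4$ points in general position in $\PP^2$ admit no nontrivial projective stabilizer, so $\Aut(S)$ itself has order at most $|\WE|$. For $d\ge 6$ the kernel is the group of $\FFF_q$-points of a positive-dimensional linear algebraic group --- a two-dimensional torus for $d=6$, and groups assembled from $\PGL_2$ and $\PGL_3$ for $d=7,8,9$ --- and the extremal instance is the plane $S=\PP^2$, with $\Aut(S)=\PGL_3(\FFF_q)$ of order $q^3(q^2-1)(q^3-1)$. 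A direct comparison shows that the automorphism group orders for $d=6,7,8$ grow like $q^2$, $q^4$, and $q^6$ respectively, and so are dominated by the plane case.

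The plane also settles sharpness: $\PGL_3(\FFF_q)$ contains the non-abelian simple group $\PSL_3(\FFF_q)$ as a normal subgroup with trivial centralizer, so every normal abelian subgroup is trivial and $j(\PGL_3(\FFF_q))=q^3(q^2-1)(q^3-1)$; thus the bound is attained by $\PGL_3(\FFF_q)\subset\Cr_2(\FFF_q)$ and cannot be lowered. For a conic bundle model --- of which there are infinitely many, so that a uniform structural bound rather than an enumeration is needed --- the action on the base yields an exact sequence $1\to G_F\to G\to G_B\to 1$ with $G_B\subseteq\Aut(\PP^1)=\PGL_2(\FFF_q)$ and $G_F$ acting fibrewise. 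A standard analysis of the fibrewise automorphisms bounds $|G_F|$ and shows that the minimal index of a normal abelian subgroup of $G$ is controlled by $|\PGL_2(\FFF_q)|$ up to a bounded factor, hence stays far below $|\PGL_3(\FFF_q)|$, so conic bundles never govern the final constant.

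Taking the supremum over all models yields the upper bound, and the main difficulty --- and the source of the exceptional values $q\in\{2,4,8\}$ --- lies entirely in the del Pezzo surfaces of low degree $d\le 3$. For $q\ge 13$ one has $|\PGL_3(\FFF_q)|>|\WE|$, so the plane dominates automatically; for the remaining small fields one must decide precisely which subgroups of $\mathrm{W}(\mathrm{E}_6)$, $\mathrm{W}(\mathrm{E}_7)$, and $\mathrm{W}(\mathrm{E}_8)$ are realized by automorphism groups of cubic surfaces and of del Pezzo surfaces of degrees $2$ and $1$ defined over $\FFF_q$. Here characteristic two is genuinely special: over $\FFF_2$, $\FFF_4$, and $\FFF_8$ there exist del Pezzo surfaces whose automorphism groups realize subgroups of these Weyl groups large enough to exceed $|\PGL_3(\FFF_q)|$ and to force the weaker bound $|\WE|$, whereas over all other fields --- including the small odd-characteristic fields $\FFF_3,\FFF_5,\FFF_7,\FFF_9,\FFF_{11}$ --- the realizable automorphism groups remain within the plane bound. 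Establishing this dichotomy through an explicit determination of the automorphism groups of low-degree del Pezzo surfaces over finite fields, including the arithmetic of the Galois action on the exceptional configuration, is the technical heart of the argument; the minimal model reduction itself is comparatively routine.
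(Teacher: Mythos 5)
Your overall strategy coincides with the paper's: regularize, run the $G$-MMP, split into del Pezzo surfaces and conic bundles over $\PP^1$, and obtain sharpness from the fact that $\PGL_3(\FFF_q)$ contains the simple group $\PSL_3(\FFF_q)$ as a normal subgroup with trivial centralizer. However, both places where you defer the work are exactly where the content of the proof lies, and what you assert about them does not hold up. In the conic bundle case, the fibrewise group $G_F$ is a finite subgroup of $\Aut(\cC)$, where $\cC$ is the scheme-theoretic generic fiber, a conic over the \emph{infinite} field $\FFF_q(t)$; no ``standard analysis'' bounds its finite subgroups, since by the classification of finite subgroups of $\PGL_2$ in characteristic $p$ the groups $\PSL_2(\FFF_{p^r})$ and $\PGL_2(\FFF_{p^r})$ of arbitrarily large order are candidates, and they do occur over $\overline{\FFF_q(t)}$. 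The paper's key technical input (its Section 4) is to exploit that $\FFF_q(t)$ is purely transcendental over $\FFF_q$: roots of unity in a degree-$l$ extension of such a field have order dividing $q^l-1$; hence any element of $\Aut(\cC)$ of finite order coprime to $p$ has order dividing $q^2-1$ (via a fixed point over a quadratic extension and the action on its tangent space), and consequently $\Aut(\cC)$ contains no copy of $\PSL_2(\FFF_{p^r})$ with $p^r>q$. Your proposal contains no substitute for this. Two further slips: you need an abelian subgroup of $G_F$ that is \emph{characteristic} in $G_F$, not merely normal in it, in order to conclude it is normal in all of $G$; and your claim that the conic bundle contribution stays ``far below'' $|\PGL_3(\FFF_q)|$ fails at $q=2$, where the paper's bound is $q(q^2-1)\cdot 60=360>168=|\PGL_3(\FFF_2)|$ (harmless only because the theorem's constant at $q=2$ is $|\WE|$).

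On the del Pezzo side, the crude bound $|\Aut(S)|\le|\WE|$ in degrees $d\le 5$ cannot give the theorem as stated, because for odd $q\le 11$ one has $q^3(q^2-1)(q^3-1)<|\WE|$ (for instance $5616$ for $q=3$); so one must beat $|\WE|$ in degrees $1$, $2$, $3$ for every odd $q$, and your proposed route --- ``decide precisely which subgroups of $\mathrm{W}(\mathrm{E}_6)$, $\mathrm{W}(\mathrm{E}_7)$, $\mathrm{W}(\mathrm{E}_8)$ are realized'' --- is announced but never carried out. The paper avoids any such realization analysis: in degree $2$ (for every $q$) the anticanonical double cover $S\to\PP^2$ gives a homomorphism $\Aut(S)\to\Aut(\PP^2)$ whose kernel is normal abelian of order at most $2$, hence a normal abelian subgroup of index at most $|\PGL_3(\FFF_q)|$; in degree $3$ with $q$ odd it quotes the Dolgachev--Duncan bound $|\Aut(S)|\le 648$; in degree $1$ with $q$ odd it uses Stichtenoth's bound $|\Aut(C)|\le 4096$ for the genus-$4$ branch curve of the bicanonical double cover. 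Finally, your explanation of the exceptional set $\{2,4,8\}$ is wrong in substance: you claim that over $\FFF_2$, $\FFF_4$, $\FFF_8$ there \emph{exist} del Pezzo surfaces whose automorphism groups exceed $|\PGL_3(\FFF_q)|$ and ``force'' the constant $|\WE|$. The paper claims no such thing: in even characteristic it merely uses the inclusion $\Aut(S)\subset\WE$ as an upper bound in degrees $1$ and $3$; the set $\{2,4,8\}$ appears purely because among even $q$ the inequality $|\WE|>q^3(q^2-1)(q^3-1)$ holds exactly for $q\le 8$; and whether $|\WE|$ is attained for $q\in\{2,4,8\}$ is explicitly left open --- the largest known example there, the Fermat cubic over $\FFF_4$ with automorphism group of order $25\,920$, is still below $|\PGL_3(\FFF_4)|=60\,480$.
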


We will see in Example~\ref{example:Fn} that $\Cr_2(\FFF_q)$ contains finite subgroups whose order is an arbitrarily large power of~$p$.

Using the terminology of \cite[Definition~1]{Popov14}, one may say that Theorem~\ref{theorem:main}
provides an upper bound for the Jordan constant of the group $\Cr_2(\FFF_q)$, and computes this constant
for all $q\not\in\{2,4,8\}$.
It would be interesting to find out the exact values of the Jordan constant of $\Cr_2(\FFF_q)$ for $q\in\{2,4,8\}$
(cf.~Example~\ref{example:cubic}).

It was proved in \cite{Cantat} that for odd $q$
there exists a surjective homomorphism
$$
\Cr_2(\FFF_q)\longrightarrow \mathfrak{S}_{q^2+q+1}.
$$
Together with Theorem~\ref{theorem:main},
this provides a nice geometric example of a group
having smaller Jordan constant than its quotient group.

\medskip
Let us sketch the idea of the proof of Theorem~\ref{theorem:main}.
As usual, studying finite subgroups of $\Cr_2(\FFF_q)$ boils down to
studying finite groups acting biregularly on del Pezzo surfaces and conic bundles over~$\PP^1$.
The former groups do not provide obstructions to the desired Jordan property of
$\Cr_2(\FFF_q)$ due to the boundedness of del Pezzo surfaces (and the fact that
the automorphism group of any such surface over $\FFF_q$ is finite).
As for the groups acting on a conic bundle $\phi\colon S\to \PP^1$, they are mostly contained in the automorphism
group of the scheme-theoretic generic fiber $\cC$ of $\phi$. Note that $\cC$ is a conic over the infinite field~$\FFF_q(t)$, where
$t$ is an independent variable. However, using the fact that $\FFF_q(t)$ is purely transcendental over $\FFF_q$,
one can show that $\Aut(\cC)$ contains few finite subgroups. More precisely, it is possible to provide a uniform bound
for Jordan constants of the groups~$\Aut(\cC)$ for all conics $\cC$ over $\FFF_q(t)$ (see Lemma~\ref{lemma:ADE}), which is enough to deduce
Theorem~\ref{theorem:main}.

The plan of the paper is as follows.
In Section~\ref{section:groups} we collect auxiliary facts about finite groups.
In Section~\ref{section:examples} we present examples of large finite subgroups of Cremona group over finite fields.
In Section~\ref{section:conics} we make observations concerning the automorphism groups of conics over
purely transcendental field extensions of~$\FFF_q$.
In Section~\ref{section:DP} we study automorphism groups of del Pezzo surfaces over finite fields.
In Section~\ref{section:CB} we study automorphism groups of conic bundles over finite fields
and complete the proof of Theorem~\ref{theorem:main}.

We use the following notation and conventions. For a field $\FFF$, by $\bar{\FFF}$
we denote its algebraic closure. If $X$ is a variety over a field $\KKK$, and $\KKK\subset\LLL$
is a field extension, we denote by $X_\LLL$ the extension of scalars of $X$ to $\LLL$.
By a conic we mean a smooth curve of degree $2$ in~$\PP^2$.
A del Pezzo surface is a smooth projective surface $S$ whose anticanonical divisor
$-K_S$ is ample; the degree of~$S$ is the self-intersection number~$K_S^2$.

\medskip
We are grateful to S.\,Gorchinskiy, L.\,Rybnikov, A.\,Trepalin, and V.\,Vologodsky for useful discussions.

\section{Groups}
\label{section:groups}

In this section we collect auxiliary facts about linear groups over finite fields. Most
of them are well known to experts, but we provide their proofs for the reader's convenience.

\begin{lemma}
\label{lemma:GLn-elements}
Let $\tilde{g}\in\GL_n(\FFF_q)$ be an element of order $\tilde{m}$ coprime to $q$. Then
$\tilde{m}\le q^n-1$,
and this bound is sharp.
\end{lemma}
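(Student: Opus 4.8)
The plan is to exploit the hypothesis that $\tilde{m}$ is coprime to $q$, hence to $p=\Char\FFF_q$, which forces $\tilde{g}$ to be semisimple and makes the commutative $\FFF_q$-algebra it generates étale. First I would consider $A:=\FFF_q[\tilde{g}]\subseteq\operatorname{Mat}_n(\FFF_q)$, the $\FFF_q$-subalgebra generated by $\tilde{g}$. Its $\FFF_q$-dimension equals the degree of the minimal polynomial $f$ of $\tilde{g}$, and since $f$ divides the characteristic polynomial we get $\dim_{\FFF_q}A=\deg f\le n$.

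Next I would use coprimality to understand the structure of $A$. Because $f$ divides $x^{\tilde{m}}-1$ and $\gcd(\tilde{m},p)=1$, the polynomial $x^{\tilde{m}}-1$ is separable, so $f$ is a product of distinct irreducible factors $f_1,\dots,f_r$ over $\FFF_q$, of degrees $d_1,\dots,d_r$ with $\sum_i d_i=\deg f\le n$. By the Chinese Remainder Theorem
\[
A\cong\prod_{i=1}^r \FFF_q[x]/(f_i)\cong\prod_{i=1}^r\FFF_{q^{d_i}},
\]
a product of finite fields, in which $\tilde{g}$ becomes a unit.

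To bound $\tilde{m}$ I would simply count. The cyclic group $\langle\tilde{g}\rangle$ consists of the $\tilde{m}$ distinct powers $\tilde{g}^0,\dots,\tilde{g}^{\tilde{m}-1}$, each of which is invertible and hence a nonzero element of $A$. Therefore
\[
\tilde{m}=\bigl|\langle\tilde{g}\rangle\bigr|\le|A\setminus\{0\}|=q^{\sum_i d_i}-1\le q^n-1,
\]
which is the desired inequality. Alternatively, one may write $\tilde{m}=\lcm_i(o_i)$, where $o_i\mid q^{d_i}-1$ is the order of the $i$-th component of $\tilde{g}$, and verify the elementary estimate $\lcm_i(q^{d_i}-1)\le q^{\sum_i d_i}-1$; the counting argument above avoids this.

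Finally, for sharpness I would realize the bound via the regular representation of $\FFF_{q^n}$: viewing $\FFF_{q^n}$ as an $n$-dimensional $\FFF_q$-vector space and letting each element act by multiplication embeds $\FFF_{q^n}^\times$ into $\GL_n(\FFF_q)$, and a generator of the cyclic group $\FFF_{q^n}^\times$ then has order exactly $q^n-1$, which is automatically coprime to $q$. The only real subtlety is the bookkeeping ensuring $\dim_{\FFF_q}A\le n$ together with the passage to the product of fields; once these are in place the core counting step is immediate.
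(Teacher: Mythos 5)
Your proof is correct, and it takes a genuinely different route from the paper's on both halves of the lemma. For the upper bound, the paper decomposes $V=\FFF_q^n$ into $\tilde{g}$-invariant summands via orthogonal projectors built from a B\'ezout identity for the pairwise coprime irreducible factors $\mu_i$ of the minimal polynomial, bounds the order of each factor $\tilde{g}_i$ by $q^{d_i}-1$, and then runs the estimate $\tilde{m}=\lcm(\tilde{m}_1,\ldots,\tilde{m}_r)\le\prod(q^{n_i}-1)\le q^n-1$ of \eqref{eq:mm}; your counting of the $\tilde{m}$ distinct nonzero powers of $\tilde{g}$ inside the commutative algebra $A=\FFF_q[\tilde{g}]$ of dimension $\deg f\le n$ reaches the same conclusion in one step and bypasses the lcm estimates entirely. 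In fact your counting step does not really need the CRT decomposition (invertibility of $\tilde{g}$ in $A$ is already clear from $\tilde{g}^{-1}=\tilde{g}^{\tilde{m}-1}\in A$), nor even the coprimality hypothesis, so it proves the bound $\tilde{m}\le q^n-1$ for \emph{every} element of $\GL_n(\FFF_q)$ — a slightly stronger statement than the lemma. For sharpness, the paper's construction — a cyclic permutation matrix of size $q^n-1$, decomposed along the irreducible factors of $x^{q^n-1}-1$, with the $n$-dimensional component containing a primitive $(q^n-1)$-th root of unity extracted — is a roundabout way of producing exactly what you write down directly: the regular representation of $\FFF_{q^n}$ over $\FFF_q$, i.e.\ a Singer cycle, and your verification that a generator of $\FFF_{q^n}^\times$ acts with order exactly $q^n-1$ is immediate. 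What the paper's heavier setup buys is reuse: the componentwise orders $\tilde{m}_i\mid q^{d_i}-1$ and the chain \eqref{eq:mm} are invoked again verbatim in the proof of Lemma~\ref{lemma:PGLn-elements}, where the bound $(q^n-1)/(q-1)$ for $\PGL_n(\FFF_q)$ requires the finer fact that each $q^{d_i}-1$ is divisible by $q-1$ — information your one-shot counting argument does not by itself supply, though your product-of-fields description of $A$ could be made to yield it.
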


\begin{proof}
Let $\mu\in \FFF_q[x]$ be the minimal polynomial of $\tilde{g}$.
Then $\deg \mu\le n$ and $\mu$ divides~\mbox{$x^{\tilde{m}}-1$}.
Let $\mu=\mu_1\cdot\ldots\cdot \mu_r$ be the decomposition of $\mu$ into a product of
irreducible polynomials. Since $\tilde{m}$ is coprime to $q$, the element $\tilde{g}$ is diagonalizable over $\bar{\FFF}_q$, and hence
the polynomial $\mu$ has no multiple roots.
Thus, the polynomials $\mu_1,\ldots, \mu_r$ are pairwise coprime. Let $\mu_i'=\mu/\mu_i$. Then for some $\nu_i\in \FFF_q[x]$ we can write
\[
\sum \mu_i'\nu_i=1.
\]
Put $h_i= \mu_i'(\tilde{g})\nu_i(\tilde{g})$. Then $h_1,\ldots,h_r$ are orthogonal projectors:
\[
h_ih_j=0\ \text{for}\ i\neq j, \qquad h_i^2=h_i, \qquad \text{and}\qquad \sum h_i=1.
\]
This system of projectors induces a decomposition of
the vector space $V=\FFF_q^n$ into the direct sum
of $\tilde{g}$-invariant subspaces:
\[
V=\bigoplus V_i,\qquad V_i= h_i(V).
\]
Put $\tilde{g}_i= h_i\cdot (\tilde{g}-1)+1$. Then
$\tilde{g}_1,\ldots,\tilde{g}_r$ pairwise commute
and $\tilde{g}=\tilde{g}_1\cdot\ldots\cdot \tilde{g}_r$.
By the construction
\[
\tilde{g}_i(v)=
\begin{cases}
\tilde{g}(v),&\text{if $v\in V_i$,}
\\
v,&\text{if $v\in V_j$, $j\neq i$.}
\end{cases}
\]
In particular, we have $\mu_i(\tilde{g}_i|_{V_i})=0$. Since $\mu_i$ is irreducible, we conclude that
$\mu_i$ is the minimal polynomial of $\tilde{g}_i|_{V_i}$.
Let $\tilde{m}_i$ be the order of $\tilde{g}_i$, let $d_i=\deg \mu_i$,
and let $n_i=\dim V_i$.
Note that $d_i\le n_i$, and $\mu_i$ divides $x^{q^{d_i}-1}-1$ because $\mu_i$ is irreducible.
Hence any eigenvalue of $\tilde{g}_i|_{V_i}$ is a $(q^{d_i}-1)$-th root of unity.
Therefore, $\tilde{m}_i$ divides $q^{d_i}-1$.
Finally, we obtain
\begin{equation}
\label{eq:mm}
\tilde{m}=\lcm(\tilde{m}_1,\ldots \tilde{m}_r)\le \lcm(q^{d_1}-1,\ldots,q^{d_r}-1)\le \prod (q^{n_i}-1)\le q^{n}-1.
\end{equation}

To show that
the equality $m= q^n-1$ is attained, consider the vector space
$$
W=(\FFF_q)^{q^n-1}
$$
and a linear operator $g'\in \GL(W)$ acting
by a cyclic permutation of the basis vectors.
Then the minimal polynomial of $g'$ is
$$
\mu'(x)=x^{q^n-1}-1.
$$
Let~\mbox{$\mu'=\mu'_1\cdot\ldots\cdot \mu'_l$} be the decomposition into a product of
irreducible factors, and let~\mbox{$\theta\in \FFF_{q^n}$} be a primitive $(q^n-1)$-th root of unity.
Then $\theta$ is a root of some factor~$\mu_{i_0}'$.
As above this decomposition induces a decomposition
$$
W=\bigoplus W_i,
$$
and one has $g'=g_1'\cdot\ldots\cdot g_l'$, so that the minimal polynomial of~\mbox{$g'_i|_{W_i}$} is~$\mu'_i$.
Since $\theta$ is a root of $\mu'_{i_0}$, we see that~\mbox{$\deg \mu_{i_0}'=n$}. In particular, this implies $\dim W_{i_0}=n$.
Therefore, the eigenvalues of
$$
\tilde{g}=g_{i_0}'|_{W_{i_0}}\in \GL(W_{i_0})\cong \GL_n(\FFF_q)
$$
are primitive $(q^n-1)$-th roots of unity.
This means that $\tilde{g}$
is an element of order $q^n-1$ in $\GL_n(\FFF_q)$.
\end{proof}

\begin{lemma}
\label{lemma:PGLn-elements}
Let $g\in\PGL_n(\FFF_q)$ be an element of order $m$ coprime to $q$. Then
$$
m\le \frac{q^n-1}{q-1},
$$
and this bound is sharp.
\end{lemma}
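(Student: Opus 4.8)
The plan is to reduce everything to a computation inside a maximal torus of $\GL_n(\FFF_q)$, and then to an elementary inequality about least common multiples of the numbers $q^e-1$.

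For sharpness I would reuse the element constructed at the end of the proof of Lemma~\ref{lemma:GLn-elements}: there we produced $\tilde g\in\GL_n(\FFF_q)$ of order $q^n-1$ all of whose eigenvalues are primitive $(q^n-1)$-th roots of unity. Its image $g$ in $\PGL_n(\FFF_q)$ has order equal to the least $m$ with $\tilde g^m$ scalar, i.e. the least $m$ with $\alpha^{m(q-1)}=1$ for a primitive $(q^n-1)$-th root $\alpha$. Since $\gcd(q^n-1,q-1)=q-1$, this order is exactly $\frac{q^n-1}{q-1}$, and it is coprime to $q$, so the bound is attained.

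For the upper bound I would choose a lift $\tilde g\in\GL_n(\FFF_q)$ of $g$. Then $\tilde g^m=cI$ for some $c\in\FFF_q^*$, and $m$ is the least such exponent; writing $d$ for the order of $c$ in $\FFF_q^*$ we have $d\mid q-1$ and $\tilde g^{md}=I$, so the order of $\tilde g$ divides $m(q-1)$ and is therefore coprime to $q$. Hence $\tilde g$ is diagonalizable over $\bar{\FFF}_q$, and I would decompose $V=\FFF_q^n$ into the $\tilde g$-invariant subspaces attached to the distinct Frobenius orbits of eigenvalues. On the subspace $W_i$ belonging to an orbit with representative $\alpha_i$, where $\FFF_q(\alpha_i)=\FFF_{q^{e_i}}$, the operator $\tilde g$ acts as multiplication by $\alpha_i$; thus $\tilde g$ lies in the torus $T=\prod_{i=1}^s\FFF_{q^{e_i}}^*\subset\GL_n(\FFF_q)$, with $\sum_{i=1}^s e_i\le n$, and the scalar subgroup $Z\cong\FFF_q^*$ is the diagonal $\Delta$ in $T$. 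Since $g$ is the image of $\tilde g$ and $T\cap Z=\Delta$, the order $m$ of $g$ equals the order of $\tilde g$ in $T/\Delta$, hence divides $\exp(T/\Delta)$.

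It then remains to bound $\exp(T/\Delta)$. If $s=1$ this exponent is $\frac{q^{e_1}-1}{q-1}\le\frac{q^n-1}{q-1}$, since $\FFF_{q^{e_1}}^*/\FFF_q^*$ is cyclic. If $s\ge 2$, testing on elements supported on a single factor shows that $y^M\in\Delta$ forces $y_i^M=1$ for each $i$, so $\exp(T/\Delta)=\lcm(q^{e_1}-1,\dots,q^{e_s}-1)$, and I would finish with the inequality
\[
\lcm(q^{e_1}-1,\dots,q^{e_s}-1)\le \frac{q^{e_1+\cdots+e_s}-1}{q-1}\le\frac{q^n-1}{q-1}.
\]
This last inequality is the step I expect to be the real obstacle. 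I would prove it by induction on $s$: the base case $s=2$ follows from $\gcd(q^a-1,q^b-1)=q^{\gcd(a,b)}-1\ge q-1$ together with $(q^a-1)(q^b-1)\le q^{a+b}-1$, and the inductive step uses that $q-1$ divides every factor $q^{e_i}-1$, so the running lcm always shares the factor $q-1$ with the next term, which is exactly what produces the saving by $q-1$ once at least two factors are present.
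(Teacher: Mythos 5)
Your proof is correct and takes essentially the same route as the paper's: lift to $\GL_n(\FFF_q)$, decompose $V$ according to the irreducible factors of the minimal polynomial (your Frobenius orbits are exactly that decomposition), split into the cases of one versus several factors, and exploit the common divisor $q-1$ in the lcm bound, with sharpness coming from the same element reused from Lemma~\ref{lemma:GLn-elements}. The torus framing, the appeal to cyclicity of $\FFF_{q^{e}}^*/\FFF_q^*$ in the single-factor case, and the direct computation of the order for sharpness (where the paper instead combines $m(q-1)\ge q^n-1$ with the upper bound) are only cosmetic differences.
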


\begin{proof}
Consider the natural homomorphism $\pi\colon \GL_n(\FFF_q)\to \PGL_n(\FFF_q)$.
Let $\tilde{g}\in \pi^{-1}(g)$ be a lifting of $\tilde{g}$ to $\GL_n(\FFF_q)$.
Since the kernel of $\pi$ does not contain elements of order $p$,
the order $\tilde{m}$ of $\tilde{g}$ is coprime to $q$.

Let us use the notation from the proof of Lemma~\ref{lemma:GLn-elements}.
First consider the case when~\mbox{$r\ge 2$}, i.e. the minimal polynomial $\mu(x)$ of $\tilde{g}$ is reducible.
Note that every number
$q^{d_i}-1$ is divisible by $q-1$.
Thus by~\eqref{eq:mm} we have
\[
m\le \tilde{m} \le \lcm(q^{d_1}-1,\ldots,q^{d_r}-1)\le \frac{1}{q-1} \prod (q^{d_i}-1)\le \frac {q^{n}-1}{q-1}.
\]

Now consider the case when $r=1$, i.e. $\mu$ is irreducible. Denote $d=d_1$.
Then $\mu$ divides~\mbox{$x^{q^d-1}-1$}, so all the roots $\varepsilon_1,\ldots,\varepsilon_d$ of $\mu$ lie in
$\FFF_{q^d}$. Hence
$$
\lambda_1=\varepsilon_1^{\frac{q^d-1}{q-1}}
$$
is an element of $\FFF_q$, so that $\varepsilon_1$ is a common root of the polynomials $\mu$ and
$$
x^{\frac{q^d-1}{q-1}}-\lambda_1\in \FFF_q[x].
$$
Since $\mu$ is irreducible, it divides
$x^{\frac{q^d-1}{q-1}}-\lambda_1$. This implies that
$\varepsilon_1,\ldots,\varepsilon_d$ are roots of the polynomial~\mbox{$x^{\frac{q^d-1}{q-1}}-\lambda_1$} and so
\[
\varepsilon_1^{\frac{q^d-1}{q-1}}= \ldots= \varepsilon_d^{\frac{q^d-1}{q-1}},
\]
i.e. $\tilde{g}^{\frac{q^d-1}{q-1}}$ is a scalar matrix. Therefore, we have
\begin{equation*}
m\le \frac{q^d-1}{q-1}\le \frac{q^n-1}{q-1}.
\end{equation*}

To show that the above bound for $m$ is attained, consider
the element~\mbox{$\tilde{g}\in \GL_n(\FFF_q)$} of order $q^n-1$, which exists by Lemma~\ref{lemma:GLn-elements}.
Set $g=\pi(\tilde{g})$, and let $m$ be the order of $g$.
Then~$\tilde{g}^m$ is a scalar matrix over~$\FFF_q$, so that
$\tilde{g}^{m(q-1)}=1$. This gives $m(q-1)\ge q^n-1$, and hence
\[
m=\frac{q^n-1}{q-1}.
\qedhere
\]
\end{proof}

\begin{lemma}
\label{lemma:PSL2-elements}
Let $g\in\PSL_2(\FFF_q)$ be an element of order $m$ coprime to $q$. Then $m\le \frac{q+1}{2}$ if $q$ is odd, $m\le q+1$ if $q$ is even,
and these bounds are sharp.
\end{lemma}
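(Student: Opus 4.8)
The plan is to combine the bound from Lemma~\ref{lemma:PGLn-elements} with a closer look at the two types of maximal tori of $\PSL_2(\FFF_q)$; the extra factor of~$2$ in the odd case will come from the center $\{\pm I\}\subset\SL_2(\FFF_q)$. The case of even $q$ is immediate: then squaring is a bijection of $\FFF_q^*$, so every element of $\FFF_q^*$ is a square, every matrix in $\GL_2(\FFF_q)$ is a scalar multiple of one lying in $\SL_2(\FFF_q)$, and hence $\PSL_2(\FFF_q)=\PGL_2(\FFF_q)$. Lemma~\ref{lemma:PGLn-elements} with $n=2$ then gives $m\le\frac{q^2-1}{q-1}=q+1$, and the element of order $q+1$ constructed there already lies in $\PGL_2(\FFF_q)=\PSL_2(\FFF_q)$, so this bound is sharp.

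For odd $q$ I would lift $g$ to $\tilde g\in\SL_2(\FFF_q)$ using $\PSL_2(\FFF_q)=\SL_2(\FFF_q)/\{\pm I\}$. Since $\tilde g^m\in\{\pm I\}$ we get $\tilde g^{2m}=I$, so the order of $\tilde g$ divides $2m$ and is coprime to $p$; thus $\tilde g$ is diagonalizable over $\bar\FFF_q$, with two eigenvalues whose product is $\det\tilde g=1$. If these eigenvalues coincide then $\tilde g=\pm I$ and $m=1$, so I may assume they are distinct, and two cases arise. In the split case the eigenvalues $\lambda,\lambda^{-1}$ lie in $\FFF_q^*$, so $\tilde g$ lies in a conjugate of the diagonal torus $\{\mathrm{diag}(a,a^{-1})\mid a\in\FFF_q^*\}\cong\FFF_q^*$, which contains $-I$; its image in $\PSL_2(\FFF_q)$ is cyclic of order $\frac{q-1}{2}$, whence $m\mid\frac{q-1}{2}$. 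In the non-split case the eigenvalues lie in $\FFF_{q^2}\setminus\FFF_q$ and form a conjugate pair $\lambda,\lambda^q$ with $\lambda^{q+1}=\lambda\cdot\lambda^q=1$, so $\lambda$ lies in the kernel of the norm map $\FFF_{q^2}^*\to\FFF_q^*$; this kernel is cyclic of order $q+1$ and contains $-1$. Correspondingly $\tilde g$ lies in a non-split torus of $\SL_2(\FFF_q)$ that is cyclic of order $q+1$ and contains $-I$, so its image in $\PSL_2(\FFF_q)$ is cyclic of order $\frac{q+1}{2}$ and $m\mid\frac{q+1}{2}$. Combining the two cases gives $m\le\frac{q+1}{2}$, and taking $\tilde g$ to be a generator of the non-split torus shows that this value is attained.

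The hard part is the factor-of-$2$ bookkeeping: one must verify that $-I$ genuinely lies in each of the two tori of $\SL_2(\FFF_q)$ when $q$ is odd, so that passing to the quotient by the center exactly halves the order, whereas for even $q$ one has $-I=I$ and no halving occurs --- precisely the discrepancy between the two bounds. The remaining points are routine: the identification of the non-split torus with the norm-$1$ subgroup of $\FFF_{q^2}^*$ (via the action of $\FFF_{q^2}$ on $\FFF_q^2$, under which the determinant becomes the norm), and the cyclicity of that subgroup as a subgroup of the cyclic group $\FFF_{q^2}^*$.
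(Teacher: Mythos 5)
Your proof is correct and takes essentially the same route as the paper: the even case reduces to $\PSL_2(\FFF_q)=\PGL_2(\FFF_q)$ and Lemma~\ref{lemma:PGLn-elements}, while for odd $q$ you lift to $\SL_2(\FFF_q)$ and split according to whether the eigenvalues lie in $\FFF_q$ or in $\FFF_{q^2}\setminus\FFF_q$, which is precisely the paper's dichotomy of reducible versus irreducible minimal polynomial, and your identity $\lambda^{q+1}=\det\tilde g=1$ is the paper's Galois computation $\varepsilon_1^{q+1}=1$ in different clothing. The only differences are cosmetic but pleasant: the torus/norm-map packaging yields the slightly stronger divisibility statements $m\mid\frac{q-1}{2}$ and $m\mid\frac{q+1}{2}$, and sharpness follows at once by taking a generator of the non-split torus, where the paper writes down an explicit companion matrix and checks by hand that its lower powers are non-scalar.
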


\begin{proof}
If $q$ is even, then $\PSL_2(\FFF_q)=\PGL_2(\FFF_q)$. Therefore, by Lemma~\ref{lemma:PGLn-elements} we may assume that $q$ is odd.

Choose a lifting $\tilde{g}$ of the element $g\in\PSL_2(\FFF_q)$ to $\SL_2(\FFF_q)$.
If the minimal polynomial~$\mu$ of $\tilde{g}$ is reducible, then $\tilde{g}$ is conjugate in $\GL_2(\FFF_q)$ to a diagonal matrix.
In this case~\mbox{$\tilde{g}^{q-1}= 1$} and
$$
\tilde{g}^{\frac{q-1}2}=\pm 1,
$$
so that $m\le (q-1)/2$.
Thus we may assume that $\mu$ is irreducible.
The matrix $\tilde{g}$
is conjugate in $\GL_2(\FFF_{q^2})$ to a diagonal matrix. Let $\varepsilon_1, \varepsilon_2\in\FFF_{q^2}\setminus\FFF_q$ be its eigenvalues.
Let~$\sigma$ be the element of order $2$ in the Galois group~\mbox{$\Gal(\FFF_{q^2}/\FFF_{q})\cong\ZZ/2\ZZ$}.
We have
$$
\varepsilon_1^{-1}=\varepsilon_2=\sigma(\varepsilon_1)=\varepsilon_1^{q},
$$
so that $\varepsilon_1^{q+1}=1$. Hence $\varepsilon_1^{\frac{q+1}{2}}=\pm1$,
which implies
$$
\varepsilon_2^{\frac{q+1}{2}}=\varepsilon_1^{-\frac{q+1}{2}}=\varepsilon_1^{\frac{q+1}{2}}.
$$
The latter means that $\tilde{g}^{\frac{q+1}{2}}$ is a scalar matrix, and
the order of $g$ is at most $(q+1)/2$.

To prove that the bound for the order is attained, let $\zeta\in \FFF_{q^2}$ be a primitive root of unity of degree $q^2-1$. Then
$$
\zeta^{q-1}\cdot \sigma(\zeta^{q-1})=\zeta^{q-1}\cdot\zeta^{q(q-1)}=\zeta^{q^2-1}=1,
$$
and $a=\zeta^{q-1}+\sigma(\zeta^{q-1})$ is an element of $\FFF_{q}$.
Set $\mu(x)= x^2-ax+1$.
Then $\zeta^{q-1}$ and~\mbox{$\sigma(\zeta^{q-1})$}
are primitive roots of unity of degree $q+1$
and they are the roots of $\mu$. Hence,
$\mu$ is an irreducible polynomial. Note that $\mu$ is the minimal polynomial of the matrix
$$
\tilde{g}=\begin{pmatrix}0&-1\\ 1& a
\end{pmatrix}\in \SL_2(\FFF_q).
$$
Thus, $\zeta^{q-1}$ and $\sigma(\zeta^{q-1})$ are the eigenvalues of $\tilde{g}$. In particular,
one has $\tilde{g}^{\frac{q+1}2}=-1$.
On the other hand, for $0<r<(q+1)/2$ we have
$$
\big(\zeta^{q-1}\big)^r\neq \big(\zeta^{q-1}\big)^{-r}=\big(\sigma(\zeta^{q-1})\big)^r,
$$
so that $\tilde{g}^r$ is not a scalar matrix.
Hence the order of $\pi(\tilde{g})$ is equal to $(q+1)/2$.
\end{proof}

\begin{lemma}\label{lemma:Wilson}
Let $n\ge 2$ be an integer; if $n=2$, assume that $q\ge 4$.
Then the group~\mbox{$\PGL_n(\FFF_q)$} does not contain non-trivial normal abelian subgroups.
\end{lemma}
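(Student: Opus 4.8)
The plan is to leverage the simplicity of the projective special linear group together with Schur's lemma. Write $G=\PGL_n(\FFF_q)$ and let $S=\PSL_n(\FFF_q)$, regarded as a normal subgroup of $G$. The hypotheses $n\ge 2$, and $q\ge 4$ when $n=2$, exclude exactly the pairs $(n,q)\in\{(2,2),(2,3)\}$, so that $S$ is a non-abelian simple group and $\SL_n(\FFF_q)$ is perfect; I will take both of these classical facts as known.

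First I would reduce the assertion to a statement about a centralizer. Suppose, for contradiction, that $A\trianglelefteq G$ is a non-trivial abelian subgroup, and consider $A\cap S$. Since both $A$ and $S$ are normal in $G$, so is $A\cap S$; in particular $A\cap S\trianglelefteq S$. As $A\cap S$ is abelian while $S$ is simple and non-abelian, this forces $A\cap S=\{1\}$. Now for $a\in A$ and $s\in S$ the commutator $[a,s]$ lies in $A$ because $A$ is normal, and in $S$ because $S$ is normal, hence in $A\cap S=\{1\}$; thus $a$ and $s$ commute. Consequently $A$ is contained in the centralizer $C_G(S)$, and it suffices to prove $C_G(S)=\{1\}$.

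To compute this centralizer I would pass to $\GL_n(\FFF_q)$ through the quotient map $\pi\colon\GL_n(\FFF_q)\to\PGL_n(\FFF_q)$. Let $g\in\GL_n(\FFF_q)$ be a lift of an element of $C_G(S)$. Then for every $s\in\SL_n(\FFF_q)$ the commutator $gsg^{-1}s^{-1}$ is scalar, say $gsg^{-1}=c_s\cdot s$ with $c_s\in\FFF_q^{\ast}$. Taking determinants and using $\det s=1$ gives $c_s^n=1$, while comparing $g(st)g^{-1}$ with $(gsg^{-1})(gtg^{-1})$ shows that $s\mapsto c_s$ is a group homomorphism from $\SL_n(\FFF_q)$ to $\FFF_q^{\ast}$. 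Since $\SL_n(\FFF_q)$ is perfect, this homomorphism is trivial, so $g$ commutes with all of $\SL_n(\FFF_q)$. The natural representation of $\SL_n(\FFF_q)$ on $\FFF_q^n$ is irreducible, so by Schur's lemma $g$ is a scalar matrix and $\pi(g)=1$. Therefore $C_G(S)=\{1\}$, and the preceding paragraph yields $A=\{1\}$, the desired contradiction.

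I expect the main obstacle to be bookkeeping rather than conceptual: one must check that the hypotheses rule out precisely the pairs $(2,2)$ and $(2,3)$, which are the only cases where either the simplicity of $S$ or the perfectness of $\SL_n(\FFF_q)$ breaks down. Everything else is a routine combination of normality, the determinant computation, and Schur's lemma.
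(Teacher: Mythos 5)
Your proposal is correct and follows essentially the same route as the paper: the paper reduces the lemma to the triviality of the centralizer of $\PSL_n(\FFF_q)$ in $\PGL_n(\FFF_q)$ via exactly your intersection-and-commutator argument, and merely asserts that centralizer computation as straightforward, whereas you carry it out. One small point of precision: since $\FFF_q$ is not algebraically closed, Schur's lemma by itself only gives that a matrix commuting with $\SL_n(\FFF_q)$ lies in a division algebra (a field, by Wedderburn); to conclude it is scalar, observe that the transvections in $\SL_n(\FFF_q)$ span the full matrix algebra $M_n(\FFF_q)$, so the natural representation is absolutely irreducible and the commutant is exactly the scalars.
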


\begin{proof}
Recall that $\PSL_n(\FFF_q)$ is a simple non-abelian group
if $n>2$ or $q>3$, see for instance~\mbox{\cite[\S3.3.1]{Wilson}}.
Also, it is straightforward to show that the centralizer of $\PSL_n(\FFF_q)$
in $\PGL_n(\FFF_q)$ is trivial.

Suppose that $\PGL_n(\FFF_q)$ contains a non-trivial normal abelian subgroup $A$.
Since the group~\mbox{$\PSL_n(\FFF_q)$} is simple and non-abelian, we conclude that
the intersection~\mbox{$A\cap \PSL_n(\FFF_q)$} is trivial. Since both $A$ and $\PSL_n(\FFF_q)$
are normal in $\PGL_n(\FFF_q)$, this implies that they commute with each other.
In other words, $A$ is contained in the centralizer of~\mbox{$\PSL_n(\FFF_q)$}, which is a contradiction.
\end{proof}

\section{Examples}
\label{section:examples}

In this section we present examples of large finite subgroups of Cremona group over finite fields.
Recall that if $S$ is a rational surface over $\FFF_q$, then $\Aut(S)$ is isomorphic
to a subgroup of $\Cr_2(\FFF_q)$.

\begin{example}\label{example:P2}
Let $S=\PP^2$. Then $\Aut(S)\cong\PGL_3(\FFF_q)$. Therefore, we have
$$
|\Aut(S)|=q^3(q^2-1)(q^3-1),
$$
and $\Aut(S)$ does not contain non-trivial normal abelian subgroups
by Lemma~\ref{lemma:Wilson}.
\end{example}

\begin{example}\label{example:P1xP1}
Let $S=\PP^1\times\PP^1$. Then
$$
\Aut(S)\cong\big(\PGL_2(\FFF_q)\times\PGL_2(\FFF_q)\big)\rtimes\ZZ/2\ZZ.
$$
Therefore, we have
$$
|\Aut(S)|=2q^2(q^2-1)^2.
$$
If $q\ge 4$, it follows from Lemma~\ref{lemma:Wilson} that
$\Aut(S)$ does not contain non-trivial normal abelian subgroups.
Note however that
$$
2q^2(q^2-1)^2<q^3(q^2-1)(q^3-1)
$$
for every $q$. Thus, the lower bound for the Jordan constant of $\Cr_2(\FFF_q)$
provided by the group~\mbox{$\Aut(S)$} is weaker than that provided by $\Aut(\PP^2)$, in contrast with the
case of algebraically closed field of characteristic~$0$, cf. \cite[Theorem~1.9]{Yasinsky}.
\end{example}

\begin{example}\label{example:dP5}
Let $S$ be a del Pezzo surface of degree $5$ obtained as a blow up of four points
in general position on $\PP^2$ over $\FFF_q$, i.e.
such points that no three of them are collinear.
Note that such a quadruple of points exists for any~$q$, because for three lines in~$\PP^2$ not passing through one point
there always exists a point outside these lines.
One has~\mbox{$\Aut(S)\cong\mathfrak{S}_5$}, so that~\mbox{$|\Aut(S)|=120$}, and $\Aut(S)$ does not
contain non-trivial normal abelian subgroups. Since the surface $S$ is rational, the group~\mbox{$\Aut(S)$} is realized as a subgroup of~$\Cr_2(\FFF_q)$. However, the lower bound for the Jordan constant of $\Cr_2(\FFF_q)$
provided by~\mbox{$\Aut(S)$} is always weaker than that provided by~\mbox{$\Aut(\PP^2)$} and $\Aut(\PP^1\times\PP^1)$.
\end{example}

\begin{example}\label{example:cubic}
Let $S$ be the Fermat cubic in $\PP^3$ over $\FFF_4$.
This is the surface given by the equation
\[
x_0^3+x_1^3+x_2^3+x_3^3=0.
\]
One has
$$
\Aut(S)\cong\mathrm{PSU}_4(\FFF_2),
$$
see for instance \cite[\S5.1]{DD}. Thus, $\Aut(S)$ is a non-abelian simple group of order~$25\,920$.
Note that there are two skew lines $L_1$ and $L_2$ on $S$ defined by equations
\[
x_0+\omega x_1= x_2+\omega x_3= 0\quad\text{and}\quad x_0+\omega^2 x_1= x_2+\omega^2 x_3=0,
\]
respectively,
where $\omega\in \FFF_4$ is a non-trivial cubic root of unity.
Hence $S$ is rational, and so the group~\mbox{$\Aut(S)$} is realized as a subgroup of~$\Cr_2(\FFF_4)$. We point out that the lower bound for the Jordan constant of $\Cr_2(\FFF_4)$
provided by~\mbox{$\Aut(S)$} is still weaker than that provided by~\mbox{$\Aut(\PP^2)$}.
\end{example}

\begin{example}\label{example:Fn}
Let $S=\PP(1,1,n)$ be a weighted projective plane with weighted homogeneous
coordinates $x_0$, $x_1$, and $x_2$ of weights $1$, $1$, and $n$, respectively.
Then the group~$\Aut(S)$ contains a subgroup $\Gamma$ that consists
of coordinate changes
$$
(x_0:x_1:x_2)\mapsto (x_0:x_1:x_2+P(x_0,x_1)),
$$
where $P$ is a homogeneous polynomial of degree $n$ in two variables.
Thus, $\Gamma$ is isomorphic to the group of points of the vector space of dimension $n+1$ over $\FFF_q$.
Therefore, the group $\Cr_2(\FFF_q)$ contains finite subgroups whose order is an arbitrarily large power of~$p$.
\end{example}

\section{Conics over non-perfect field}
\label{section:conics}

In this section we make several observations concerning the automorphism groups of conics over purely transcendental field extensions of~$\FFF_q$.
We start with a simple algebraic fact.

\begin{lemma}\label{lemma:finite-extension}
Let $\KKK\supset \FFF_q$ be a purely transcendental field extension, and let $\LLL\supset \KKK$ be a field extension of finite degree $l$.
Suppose that the multiplicative group $\LLL^*$ contains an element $\zeta$ of finite order~$m$.
Then $m$ divides~\mbox{$q^l-1$}.
\end{lemma}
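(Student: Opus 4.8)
The plan is to reduce everything to the subfield $\FFF_q(\zeta)\subset\LLL$ generated by $\zeta$ over $\FFF_q$. Since $\zeta$ has finite multiplicative order, it is a root of unity and hence algebraic over the prime field, so $\FFF_q(\zeta)$ is a finite extension of the finite field $\FFF_q$; write $\FFF_q(\zeta)=\FFF_{q^d}$, where $d=[\FFF_q(\zeta):\FFF_q]$. Then $\zeta$ lies in the cyclic group $\FFF_{q^d}^*$ of order $q^d-1$, so $m$ divides $q^d-1$. As $d\mid l$ would give $q^d-1\mid q^l-1$, it suffices to prove that $d$ divides $l$.

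To establish $d\mid l$, I would show that the minimal polynomial $f\in\FFF_q[x]$ of $\zeta$ over $\FFF_q$ remains irreducible over $\KKK$, that is, that $[\KKK(\zeta):\KKK]=d$. Granting this, the tower $\KKK\subset\KKK(\zeta)\subset\LLL$ together with multiplicativity of degrees yields $d=[\KKK(\zeta):\KKK]\mid[\LLL:\KKK]=l$, as desired. Equivalently, it is enough to check that the $\FFF_q$-basis $1,\zeta,\dots,\zeta^{d-1}$ of $\FFF_q(\zeta)$ stays linearly independent over $\KKK$, since $\zeta$ satisfies the degree-$d$ polynomial $f$ over $\FFF_q\subset\KKK$ and so $[\KKK(\zeta):\KKK]\le d$ automatically.

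The main obstacle is precisely this linear disjointness of the algebraic extension $\FFF_q(\zeta)/\FFF_q$ from the purely transcendental extension $\KKK/\FFF_q$, and here I would use that $\KKK$ is purely transcendental. Suppose $\sum_{i=0}^{d-1}c_i\zeta^i=0$ with $c_i\in\KKK$. As only finitely many $c_i$ occur, each being a rational function in finitely many elements of a transcendence basis, I may pass to a finitely generated subextension and assume $\KKK=\FFF_q(t_1,\dots,t_n)$ with the $t_j$ algebraically independent over $\FFF_q$; clearing denominators, I may further assume $c_i\in\FFF_q[t_1,\dots,t_n]$, still not all zero. Collecting the coefficient of each monomial $t^\alpha$ rewrites the relation as $\sum_\alpha\big(\sum_i c_{i,\alpha}\zeta^i\big)t^\alpha=0$ with $c_{i,\alpha}\in\FFF_q$. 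The key point is that the $t_j$ remain algebraically independent over $\FFF_q(\zeta)$ — because $\FFF_q(\zeta)/\FFF_q$ is algebraic and transcendence degree is additive — so the monomials $t^\alpha$ are linearly independent over $\FFF_q(\zeta)$. Hence each coefficient $\sum_i c_{i,\alpha}\zeta^i\in\FFF_q(\zeta)$ vanishes, and the $\FFF_q$-linear independence of $1,\zeta,\dots,\zeta^{d-1}$ forces every $c_{i,\alpha}$, and thus every $c_i$, to be zero. This proves the independence over $\KKK$, completes the computation $[\KKK(\zeta):\KKK]=d$, and finishes the proof.
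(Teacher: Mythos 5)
Your proof is correct, and it shares the paper's overall skeleton: locate a finite subfield of $\LLL$ containing $\zeta$, show its degree $d$ over $\FFF_q$ divides $l$, and conclude from $m\mid q^d-1\mid q^l-1$. The difference is in how the key divisibility $d\mid l$ is justified. The paper works with the full algebraic closure $\FFF_{q'}$ of $\FFF_q$ in $\LLL$ and invokes a theorem of Jacobson asserting that $\KKK\otimes_{\FFF_q}\FFF_{q'}$ is a field --- this uses that $\FFF_{q'}/\FFF_q$ is separable (finite fields are perfect) and that $\FFF_q$ is separably closed in $\KKK$ --- from which $[\KKK\cdot\FFF_{q'}:\KKK]=[\FFF_{q'}:\FFF_q]$ and the divisibility follow at once. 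You instead work with the smaller field $\FFF_q(\zeta)=\FFF_{q^d}$ and prove the required linear disjointness by hand: reducing to a finitely generated subextension $\FFF_q(t_1,\dots,t_n)$, clearing denominators, and using additivity of transcendence degree to see that $t_1,\dots,t_n$ remain algebraically independent over $\FFF_q(\zeta)$, so that the monomials $t^\alpha$ are linearly independent over $\FFF_q(\zeta)$ and the putative relation collapses coefficientwise; this gives $[\KKK(\zeta):\KKK]=d$ and hence $d\mid l$ via the tower $\KKK\subset\KKK(\zeta)\subset\LLL$. Each step of your argument checks out (in particular the reduction to a finitely generated purely transcendental subfield is legitimate, since the finitely many coefficients involve only finitely many elements of the transcendence basis). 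What each approach buys: yours is self-contained and elementary, replacing the citation with a three-line monomial computation, whereas the paper's route makes clear that the lemma holds more generally whenever $\FFF_q$ is merely separably closed in $\KKK$; your expansion argument uses pure transcendence of $\KKK/\FFF_q$ in an essential way.
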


\begin{proof}
The algebraic closure of $\FFF_q$ in $\LLL$ is a finite extension of $\FFF_q$; denote it by $\FFF_{q'}$.
Since~\mbox{$\FFF_q\subset \FFF_{q'}$} is a separable algebraic extension, and $\FFF_q$ is separably closed in $\KKK$,
the ring
$$
\LLL'=\KKK\otimes_{\FFF_q}\FFF_{q'}
$$
is a field, see \cite[Theorem~IV.21(2)]{Jacobson}.
We have natural embeddings
$$
\KKK\hookrightarrow \LLL'\hookrightarrow\LLL.
$$
Denote by $d$ the degree of the field extension $\KKK\subset\LLL'$.
Then $d$ divides $l$, and since~\mbox{$\KKK\cap\FFF_{q'}=\FFF_q$}, we see that $d$ equals
the degree of the field extension~\mbox{$\FFF_q\subset\FFF_{q'}$}.
On the other hand, the element $\zeta\in\LLL$ is contained in $\FFF_{q'}$, because it is algebraic over $\FFF_q$.
Hence its order in $\LLL^*$ divides $|\FFF_{q'}^*|=q^d-1$, which in turn divides~\mbox{$q^l-1$}.
\end{proof}

\begin{remark}
If $l=1$, then by Lemma~\ref{lemma:finite-extension} for every element of finite order in $\KKK^*$,
its order divides $q-1$. In other words, the group $\KKK^*$ contains an element of finite order~$m$
if and only if the group $\FFF_q^*$ contains an element of the same order.
\end{remark}

Lemma~\ref{lemma:finite-extension} allows one to bound the finite orders of the elements in the
automorphism groups of conics over purely transcendental field extensions of~$\FFF_q$.

\begin{lemma}\label{lemma:PGL2-Fqt}
Let $\KKK\supset \FFF_q$ be a purely transcendental field extension, and let
$\cC$ be a conic over~$\KKK$.
Suppose that the group $\Aut(\cC)$ contains an element of finite order $m$ coprime to~$q$.
Then $m$ divides~\mbox{$q^2-1$}.
\end{lemma}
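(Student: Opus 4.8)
The plan is to locate a primitive $m$-th root of unity inside a field extension of $\KKK$ of degree at most $2$, and then invoke Lemma~\ref{lemma:finite-extension}. If $m=1$ there is nothing to prove, so assume $m\ge 2$ and let $g\in\Aut(\cC)$ have order $m$ coprime to $q$, hence coprime to $p=\Char\FFF_q$. Extending scalars to $\bar{\KKK}$ identifies $\cC_{\bar{\KKK}}$ with $\PP^1_{\bar{\KKK}}$ and $g$ with an element of $\PGL_2(\bar{\KKK})$ of order $m$. Since $m$ is coprime to $p$, a lift $\tilde g\in\GL_2(\bar{\KKK})$ satisfies $\tilde g^{m}=c\cdot\id$ for some $c\in\bar{\KKK}^{*}$, hence is a root of the separable polynomial $x^{m}-c$ and is therefore diagonalizable. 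Consequently $g$ acts on $\PP^1_{\bar{\KKK}}$ with exactly two distinct fixed points.

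First I would descend this fixed-point set to $\KKK$. The fixed-point subscheme $\cC^{g}\subset\cC$ is defined over $\KKK$, and its formation commutes with base change, so $(\cC^{g})_{\bar{\KKK}}$ consists of the two distinct reduced points found above. Thus $\cC^{g}$ is a geometrically reduced finite $\KKK$-scheme of length $2$, that is, an \'etale $\KKK$-algebra of degree $2$. Choosing a closed point $P\in\cC^{g}$, its residue field $\LLL=\kappa(P)$ is an extension of $\KKK$ with $[\LLL:\KKK]\le 2$. After extending scalars to $\LLL$, the point $P$ becomes an $\LLL$-rational point of $\cC_\LLL$, so $\cC_\LLL\cong\PP^1_\LLL$, and the induced automorphism $g_\LLL\in\PGL_2(\LLL)$ still has order $m$ and fixes $P$.

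Next I would extract the root of unity over $\LLL$. Since $g_\LLL$ fixes the $\LLL$-rational point $P$, a lift $\tilde g\in\GL_2(\LLL)$ can be taken upper triangular, with diagonal entries $\lambda_1,\lambda_2\in\LLL$. As before $\tilde g$ is diagonalizable, and since $g_\LLL\ne 1$ it is not scalar, so $\lambda_1\ne\lambda_2$; moreover $\tilde g^{j}$ is scalar precisely when $(\lambda_1\lambda_2^{-1})^{j}=1$. Hence $\zeta=\lambda_1\lambda_2^{-1}\in\LLL^{*}$ is an element whose order equals the order of $g_\LLL$, namely $m$. Applying Lemma~\ref{lemma:finite-extension} to the tower $\FFF_q\subset\KKK\subset\LLL$, with $\KKK$ purely transcendental over $\FFF_q$ and $l=[\LLL:\KKK]\le 2$, shows that $m$ divides $q^{l}-1$, which divides $q^2-1$, as required.

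The step I expect to be the main obstacle is the descent in the second paragraph: one must be certain that the two geometric fixed points are the closed points of an honest degree-$2$ subscheme over $\KKK$, so that a single fixed point is defined over an extension of degree at most $2$ — rather than, say, degree $4$, which is what a naive passage to a splitting field of $\cC$ followed by a separate diagonalization would produce. The hypothesis that $m$ is coprime to $p$ enters twice and is essential: it guarantees that $g$ is semisimple, so that $\cC^{g}$ is reduced of the expected length, and it underlies the identification of the order of $\zeta$ with that of $g_\LLL$.
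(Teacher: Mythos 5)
Your proof is correct and follows essentially the same route as the paper's: semisimplicity of $g$ gives two distinct geometric fixed points, a fixed point is found over an extension $\LLL\supset\KKK$ of degree at most $2$, an element of order $m$ is produced in $\LLL^*$, and Lemma~\ref{lemma:finite-extension} concludes. The differences are only presentational: where the paper asserts the existence of the quadratic extension without detail and cites \cite[Theorem~3.7]{ChenShramov} for the faithful action on the tangent space $T_P(\cC_\LLL)\cong\LLL$, you justify the descent via the fixed-point subscheme and replace the tangent-space step by the equivalent explicit computation of the eigenvalue ratio of an upper-triangular lift.
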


\begin{proof}
Let $g\in\Aut(\cC)$ be an element of order $m$. Since $m$ is coprime to $q$, we conclude that $g$
is a semi-simple element in
$$
\Aut\big(\cC_{\bar{\KKK}}\big)\cong\PGL_2\left(\bar{\KKK}\right).
$$
Hence $g$ is contained in a torus in $\PGL_2(\bar{\KKK})$, and so it has exactly two fixed points on
$$
\cC_{\bar{\KKK}}\cong\PP^1_{\bar{\KKK}}.
$$
Thus, there exists a degree $2$ field extension $\LLL\supset \KKK$ such that $g$ has a fixed point~$P$ on~$\cC_{\LLL}$.
Therefore, $g$ acts by an automorphism of order $m$ on the Zariski
tangent space~\mbox{$T_P(\cC_{\LLL})\cong \LLL$}, see for instance~\mbox{\cite[Theorem~3.7]{ChenShramov}}.
So $\GL\big(T_P(\cC_{\LLL})\big)\cong\LLL^*$ contains an element of order $m$.
Now the assertion follows from Lemma~\ref{lemma:finite-extension}.
\end{proof}

Next, we study finite subgroups of certain particular types
in automorphism groups of conics over purely transcendental field extensions of~$\FFF_q$;
recall that $q=p^k$.

\begin{lemma}\label{lemma:PSL}
Let $\KKK\supset \FFF_q$ be a purely transcendental field extension, and let $\cC$ be a conic over~$\KKK$.
Then the group $\Aut(\cC)$ does not contain
subgroups isomorphic to $\PSL_2(\FFF_{p^r})$ for~\mbox{$p^r>q$}.
\end{lemma}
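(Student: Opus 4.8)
The plan is to argue by contradiction, extracting from a hypothetical embedding $\PSL_2(\FFF_{p^r})\hookrightarrow\Aut(\cC)$ a semisimple element whose order is too large to be compatible with Lemma~\ref{lemma:PGL2-Fqt}. Suppose such an embedding exists with $p^r>q=p^k$, equivalently $r>k$. Applying the sharpness part of Lemma~\ref{lemma:PSL2-elements} with $\FFF_{p^r}$ in place of $\FFF_q$, the group $\PSL_2(\FFF_{p^r})$ contains an element of order
$$
m=\frac{p^r+1}{\gcd(2,\,p-1)},
$$
arising from a non-split maximal torus. Since $m$ divides $p^r+1$, it is coprime to $p$ and hence to $q$; as the embedding preserves orders, the corresponding element of $\Aut(\cC)$ has order $m$ coprime to $q$, so Lemma~\ref{lemma:PGL2-Fqt} forces $m\mid q^2-1=p^{2k}-1$. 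The whole problem is thereby reduced to showing that this divisibility cannot hold once $r>k$.

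For the arithmetic I would work with the multiplicative order $d$ of $p$ modulo $m$. Since $m$ divides $p^r+1$ we have $p^r\equiv-1\pmod m$, so $p^{2r}\equiv 1\pmod m$ and therefore $d\mid 2r$; on the other hand $m\mid p^{2k}-1$ gives $d\mid 2k$. Writing $g=\gcd(r,k)$, these two facts combine to $d\mid 2g$, whence $m\mid p^{2g}-1$ and in particular $m\le p^{2g}-1$. Because $g$ divides $r$ and $g\le k<r$, the ratio $r/g$ is at least $2$, so $r\ge 2g$. Also note that $r\ge 2$, so $m\ge 5$ in either characteristic.

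The inequality $m\le p^{2g}-1$ now closes the argument. When $p=2$ one has $m=2^r+1$, so $2^r+1\le 2^{2g}-1$ yields $r<2g$, contradicting $r\ge 2g$. When $p$ is odd one has $m=(p^r+1)/2$, and $m\le p^{2g}-1$ gives $p^r<2p^{2g}$; together with $r\ge 2g$ and $p\ge 3$ this forces $r=2g$, since $r>2g$ would give $p^r\ge 3p^{2g}>2p^{2g}$. But then $m=(p^{2g}+1)/2$ divides $p^{2g}-1=2m-2$, so $m\mid 2$, contradicting $m\ge 5$.

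The main obstacle is precisely this odd-characteristic case. For $p=2$ the divisibility $p^r+1\mid p^{2k}-1$ is ruled out at once by the order-of-$p$ computation, but halving $p^r+1$ breaks this clean picture, and one must separately dispose of the boundary possibility $r=2g$, where the size estimate alone is inconclusive and the explicit identity $p^{2g}-1=2m-2$ is what delivers the contradiction. Everything else is a routine translation of the group-theoretic input of Lemmas~\ref{lemma:PSL2-elements} and~\ref{lemma:PGL2-Fqt} into this number-theoretic inequality.
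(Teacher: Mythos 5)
Your proof is correct, and while it starts from the same reduction as the paper (extract an element of order $m=\frac{p^r+1}{2}$, resp.\ $2^r+1$, from $\PSL_2(\FFF_{p^r})$ via Lemma~\ref{lemma:PSL2-elements}, then invoke Lemma~\ref{lemma:PGL2-Fqt} to get $m\mid q^2-1$), the number-theoretic core is handled by a genuinely different method. The paper rules out the divisibility $m \mid p^{2k}-1$ directly: it checks that $2p^{2k-r}+1<p^r$ when $r>k$, so that the multiple $2p^{2k-r}\cdot\frac{p^r+1}{2}=p^{2k}+p^{2k-r}$ of $m$ lies strictly between $p^{2k}-1$ and $(p^{2k}-1)+m$, which is impossible if $m$ divides $p^{2k}-1$ (and similarly $2^{2k}-1<2^{2k-r}(2^r+1)<(2^{2k}-1)+(2^r+1)$ in characteristic $2$). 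You instead pass to the multiplicative order $d$ of $p$ modulo $m$, combine $d\mid 2r$ (from $p^r\equiv-1$) with $d\mid 2k$ to get the stronger structural conclusion $m\mid p^{2\gcd(r,k)}-1$, and then finish with a size estimate plus the boundary case $r=2\gcd(r,k)$, killed by the identity $p^{2g}-1=2m-2$. The paper's sandwich argument is a one-shot estimate needing no auxiliary notions, while your order-theoretic route is more conceptual and modular: it isolates exactly where the hypothesis $r>k$ enters (forcing $r\ge 2g$), and the same template generalizes readily to other divisibilities of the form $m\mid p^a+1$, $m\mid p^b-1$. The price is the extra case analysis in odd characteristic, which the paper's uniform inequality avoids; both proofs, notably, have to fight the same factor of $2$ coming from the center of $\SL_2$.
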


\begin{proof}
By Lemma~\ref{lemma:PSL2-elements},
the group $\PSL_2(\FFF_{p^r})$ contains an element whose order equals~\mbox{$\frac{p^r+1}{2}$}
if $p$ is odd, and an element whose order equals
$2^r+1$ if $p=2$.
On the other hand, by Lemma~\ref{lemma:PGL2-Fqt} the finite orders of elements
of the group $\Aut(\cC)$ that are coprime to $p$ divide~\mbox{$q^2-1=p^{2k}-1$}; in particular, we have $2k\ge r$.
If $p$ is odd and $r>k$, then
$$
2p^{2k-r}+1<3p^{2k-r}\le p^{2k-r+1}<p^r,
$$
so that
$$
p^{2k}-1<2p^{2k-r}\cdot\frac{p^r+1}{2}=p^{2k}+p^{2k-r}<\left(p^{2k}-1\right)+\frac{p^r+1}{2}.
$$
In other words, $p^{2k}-1$ does not divide $p^r+1$ in this case.
Similarly, if $r>k$, then
$$
2^{2k}-1<2^{2k-r}(2^r+1)<(2^{2k}-1)+(2^r+1),
$$
so that $2^{2k}-1$ does not divide $2^r+1$.
\end{proof}

We will need the following classification of finite subgroups in the automorphism group
of a projective line.

\begin{theorem}[{see e.g. \cite[Theorem~2.1]{DD}}]
\label{theorem:ADE}
Let $\LLL$ be a field of characteristic~$p$, and let~$G$ be a finite subgroup of $\PGL_2(\LLL)$.
Then $G$ is isomorphic to one of the following groups:
\begin{enumerate}
\renewcommand\labelenumi{\rm (\arabic{enumi})}
\renewcommand\theenumi{\rm (\arabic{enumi})}
\item
\label{theorem:ADE-1}
a dihedral group of order $2m$, $m\ge 2$;

\item
\label{theorem:ADE-2}
one of the groups $\mathfrak{A}_4$, $\mathfrak{S}_4$, or
$\mathfrak{A}_5$;

\item
\label{theorem:ADE-3}
the group $\PSL_2(\mathbf{F}_{p^r})$ for some $r\ge 1$;

\item
\label{theorem:ADE-4}
the group $\PGL_2(\mathbf{F}_{p^r})$ for some $r\ge 1$;

\item
\label{theorem:ADE-5}
a group of the form $G_p\rtimes \ZZ/m\ZZ$, where $m\ge 1$
is coprime to~$p$, and $G_p$ is a subgroup of the additive group of~$\LLL$.
\end{enumerate}
\end{theorem}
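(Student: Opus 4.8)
The plan is to analyze the action of $G$ on $\PP^1_{\bar\LLL}$, organizing everything around the Sylow $p$-subgroups and using two elementary facts about $\PGL_2(\bar\LLL)$ in characteristic $p$: a non-trivial element of order coprime to $p$ is semisimple and fixes exactly two points of $\PP^1_{\bar\LLL}$, whereas a non-trivial element whose order is divisible by $p$ is unipotent, has order exactly $p$, and fixes a unique point. Consequently every $p$-subgroup of $G$ is elementary abelian and fixes a common point. The first division of the argument is according to whether $p$ divides $|G|$.

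First I would treat the case $p\nmid|G|$ by the classical fixed-point count. Every non-trivial element then has exactly two fixed points; letting $G$ act on the finite set of all such \emph{poles}, the stabilizer of a pole is a subgroup of $\PGL_2(\bar\LLL)$ of order prime to $p$ fixing a point, hence it embeds into the torus $\bar\LLL^*$ and is cyclic, say of order $m_i$ on the $i$-th orbit. Counting incident pairs $(g\neq 1,\,x)$ in two ways gives
\[
2-\frac{2}{|G|}=\sum_{i=1}^{s}\Bigl(1-\frac{1}{m_i}\Bigr).
\]
A routine inspection of the finitely many solutions yields either $s=2$ with $G$ cyclic (case~\ref{theorem:ADE-5} with trivial $p$-part), or $s=3$ with $(m_1,m_2,m_3)$ equal to $(2,2,m)$, $(2,3,3)$, $(2,3,4)$, or $(2,3,5)$, so that $G$ is dihedral (case~\ref{theorem:ADE-1}) or one of $\mathfrak{A}_4$, $\mathfrak{S}_4$, $\mathfrak{A}_5$ (case~\ref{theorem:ADE-2}).

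Next, suppose $p\mid|G|$ and let $P$ be a Sylow $p$-subgroup. If $P$ is normal, then the unique fixed point $x_0$ of the non-trivial unipotent group $P$ is preserved by all of $G$, and since $x_0$ is $\LLL$-rational the group $G$ lies in the stabilizer of $x_0$, the affine group $\LLL\rtimes\LLL^*$. Intersecting with the translations recovers $G_p=P$ as a finite, hence elementary abelian, subgroup of $(\LLL,+)$, while $G/P$ embeds into $\LLL^*$ and is therefore cyclic of some order $m$ coprime to $p$; by Schur--Zassenhaus the extension splits and $G$ falls into case~\ref{theorem:ADE-5}.

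The hard part, which I expect to be the main obstacle, is the remaining case in which $G$ has more than one Sylow $p$-subgroup. Here I would pick two distinct Sylow subgroups $P_1$, $P_2$ fixing distinct points, normalize these to $0$ and $\infty$, and study the subgroup they generate. The normalizer of each Sylow subgroup is a Borel-type group $P\rtimes C$ with $C$ cyclic acting by scaling, and the interaction of the two opposite unipotent subgroups endows the underlying additive group with the structure of a finite subfield $\FFF_{p^r}\subseteq\LLL$; equivalently, $G$ acquires a split $BN$-pair of rank one. Identifying $\langle P_1,P_2\rangle$ with $\PSL_2(\FFF_{p^r})$, and then comparing $|G|$ with the order of the normalizer of a Sylow $p$-subgroup in order to decide between cases~\ref{theorem:ADE-3} and~\ref{theorem:ADE-4}, is the technical heart of the matter. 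This is the classical theorem of Dickson, and I would either carry out the required commutator computations by hand or invoke the recognition theorem for rank-one split $BN$-pairs.
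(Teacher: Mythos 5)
First, a point of reference: the paper does not prove this statement at all — it is quoted from the literature (it is Dickson's classical classification, cited via \cite[Theorem~2.1]{DD}) — so your attempt can only be compared with the classical proof, whose overall strategy (fixed-point counting when $p\nmid|G|$, Borel-subgroup analysis when a Sylow $p$-subgroup is normal, and a separate argument producing $\PSL_2$/$\PGL_2$ otherwise) your outline does follow. Your first two cases are essentially sound. The genuine gap is the third case, which is precisely where the theorem has its real content: when $p$ divides $|G|$ and the Sylow $p$-subgroup is not normal, you give no argument, proposing instead to ``invoke the classical theorem of Dickson'' or a recognition theorem for split rank-one $BN$-pairs. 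Appealing to Dickson's theorem is circular, since the statement being proved \emph{is} Dickson's theorem. Appealing to $BN$-pair recognition is a legitimate but far heavier external input, and even then the reduction is only sketched: you would still need to verify the $BN$-pair axioms for $G$, use the structure of the point stabilizers to rule out the other rank-one groups (unitary, Suzuki, Ree), show that two opposite unipotent subgroups have equal order and generate a copy of $\FFF_{p^r}$ inside $\LLL$, and finally show that $G$ is squeezed between $\PSL_2(\FFF_{p^r})$ and its normalizer in $\PGL_2(\bar{\LLL})$, which is $\PGL_2(\FFF_{p^r})$. None of this is carried out, so cases \ref{theorem:ADE-3} and \ref{theorem:ADE-4} remain unproved in your proposal.

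There is also a localized false step in your normal-Sylow case: the unique fixed point $x_0$ of a nontrivial unipotent element of $\PGL_2(\LLL)$ need \emph{not} be $\LLL$-rational when $p=2$ and $\LLL$ is not perfect. For instance, over $\LLL=\FFF_2(t)$ the class of $\begin{pmatrix}0&t\\1&0\end{pmatrix}$ has order $2$ in $\PGL_2(\LLL)$ and fixes only the point $(\sqrt{t}:1)$, which lies in a purely inseparable quadratic extension; so the inference ``$x_0$ is $\LLL$-rational, hence $G\subset\LLL\rtimes\LLL^*$'' fails as stated. (For odd $p$ your claim is correct: a lift $\tilde g\in\GL_2(\LLL)$ with $\tilde g^{\,p}=\lambda\cdot\mathrm{id}$ has trace $2\mu$ where $\mu^p=\lambda$, so $\mu\in\LLL$ and $\tilde g/\mu$ is unipotent over $\LLL$.) The gap is reparable: pass to the purely inseparable extension $\LLL'=\LLL(x_0)$, run your argument there to get $G\cong G_p\rtimes\ZZ/m\ZZ$ with $G_p\subset(\LLL',+)$, and then observe that $G_p$ embeds abstractly into $(\LLL,+)$ because either $\LLL$ is perfect (whence $\LLL'=\LLL$) or $\LLL$ is infinite, so $(\LLL,+)$ is an $\FFF_p$-vector space of infinite dimension. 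Since the statement asserts $G_p\subset(\LLL,+)$ rather than $(\bar{\LLL},+)$, this point cannot be skipped. Finally, a smaller omission: your standing claim that a finite $p$-subgroup of $\PGL_2(\bar{\LLL})$ is elementary abelian with a common fixed point is true but not automatic from ``every $p$-element has order $p$''; it needs an argument (e.g.\ a nontrivial central element $z$ has a unique fixed point, which every element of the group must then fix, embedding the group into the translations of the corresponding Borel subgroup).
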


Recall that a subgroup of a group $G$ is called \emph{characteristic}
if it is preserved by all automorphisms of~$G$. Using Theorem~\ref{theorem:ADE}, we prove

\begin{lemma}\label{lemma:ADE}
Let $\KKK\supset \FFF_q$ be a purely transcendental field extension, let $\cC$ be a conic over~$\KKK$,
and let $G$ be a finite subgroup of~\mbox{$\Aut(\cC)$}. Then
$G$ contains a characteristic abelian subgroup of index at most
$$
J=\max\{q(q^2-1),\, 60\}.
$$
\end{lemma}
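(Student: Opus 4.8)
The plan is to apply the classification in Theorem~\ref{theorem:ADE} to the finite subgroup $G\subset\Aut(\cC)\subset\PGL_2(\bar\KKK)$, going case by case and showing that in each case $G$ has a characteristic abelian subgroup of the required index. First I would dispose of the cases that are bounded outright: for $G$ among $\Alt_4$, $\Sym_4$, $\Alt_5$ (case~\ref{theorem:ADE-2}), the trivial subgroup already has index at most $|G|\le 60\le J$, and the trivial subgroup is characteristic, so there is nothing to do. For the dihedral group of order $2m$ (case~\ref{theorem:ADE-1}), the cyclic subgroup of rotations of order $m$ is characteristic (it is the unique such cyclic subgroup for $m\ge 3$, and for $m=2$ the group is abelian anyway) and has index $2$; but I must still bound $m$, and here the orders that are coprime to $p$ divide $q^2-1$ by Lemma~\ref{lemma:PGL2-Fqt}, while the $p$-part of $m$ must be controlled separately since such elements are unipotent.

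The genuinely interesting cases are~\ref{theorem:ADE-3},~\ref{theorem:ADE-4}, and~\ref{theorem:ADE-5}. For $\PSL_2(\FFF_{p^r})$ and $\PGL_2(\FFF_{p^r})$, Lemma~\ref{lemma:PSL} already forces $p^r\le q$, so these groups have bounded order: $|\PGL_2(\FFF_{p^r})|=p^r(p^{2r}-1)\le q(q^2-1)\le J$, and again the trivial subgroup serves as the characteristic abelian subgroup of index at most $J$. (I should check that Lemma~\ref{lemma:PSL} handles $\PGL_2(\FFF_{p^r})$ as well, since it contains $\PSL_2(\FFF_{p^r})$ as a subgroup, so if the latter is excluded for $p^r>q$ then so is the former.) The main case is~\ref{theorem:ADE-5}, a group of the form $G_p\rtimes\ZZ/m\ZZ$ where $G_p$ is an elementary abelian $p$-group (a finite subgroup of the additive group of $\bar\KKK$) and $m$ is coprime to $p$. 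Here $G_p$ is the unique Sylow $p$-subgroup, hence characteristic, it is abelian, and its index is $m$, which divides $q^2-1\le q(q^2-1)\le J$ by Lemma~\ref{lemma:PGL2-Fqt}. So $G_p$ is the desired characteristic abelian subgroup.

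The hard part, and the step I would spend the most care on, is the $p$-part of the dihedral case~\ref{theorem:ADE-1}. In characteristic $p$ a dihedral group can a priori contain $p$-torsion in its rotation part, which would not be bounded by Lemma~\ref{lemma:PGL2-Fqt}; I would need to argue that an element realizing a dihedral rotation is semisimple (it has two fixed points on $\cC_{\bar\KKK}\cong\PP^1$ since it is inverted by an involution and thus lies in a torus), so its order coprime-to-$p$ part divides $q^2-1$ and its $p$-part is constrained as well—in fact a non-trivial unipotent element has order exactly $p$ and cannot be inverted to give a dihedral structure of the relevant type, so the rotation subgroup has order coprime to $p$. Once that is settled, $m\mid q^2-1$ and the cyclic rotation subgroup of index $2$ finishes the case within the bound $J$. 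I would also take care, throughout, to verify that the abelian subgroup I select is genuinely characteristic rather than merely normal, since the statement asserts characteristicity; for Sylow subgroups and unique cyclic subgroups of a given order this is automatic, but it is worth recording explicitly.
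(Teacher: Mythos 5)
Your case analysis is essentially the paper's own proof: for types~\ref{theorem:ADE-2}--\ref{theorem:ADE-4} of Theorem~\ref{theorem:ADE} one takes the trivial subgroup, bounding $|G|$ by $60$ or, via Lemma~\ref{lemma:PSL} (which indeed excludes $\PGL_2(\FFF_{p^r})$ for $p^r>q$ as well, since it contains $\PSL_2(\FFF_{p^r})$), by $q(q^2-1)$; for type~\ref{theorem:ADE-5} one takes the unique, hence characteristic, Sylow $p$-subgroup $G_p$, whose index $m$ divides $q^2-1$ by Lemma~\ref{lemma:PGL2-Fqt}; for type~\ref{theorem:ADE-1} one takes the cyclic rotation subgroup of index $2$ (or all of $G$ when $|G|=4$). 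All of this is correct and matches the paper's argument case for case.

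The one place you go astray is precisely the step you single out as ``the hard part'', and the problem is that you have manufactured a difficulty that does not exist. The lemma bounds the \emph{index} of the characteristic abelian subgroup, not its order; in the dihedral case the rotation subgroup has index $2\le J$ no matter what $m$ is, so no bound on $m$ is required, and the paper proves none. Worse, the claim you propose to establish in order to control the $p$-part of $m$ --- that a nontrivial unipotent element cannot be inverted by an involution, so that the rotation subgroup has order coprime to $p$ --- is false. For odd $p$, put
$$
u=\begin{pmatrix}1&1\\0&1\end{pmatrix},\qquad s=\begin{pmatrix}-1&0\\0&1\end{pmatrix}
$$
in $\PGL_2$ of a field of characteristic $p$; then $s$ is an involution, $sus^{-1}=u^{-1}$, and $\langle u,s\rangle$ is a dihedral group of order $2p$ --- this is exactly the overlap between types~\ref{theorem:ADE-1} and~\ref{theorem:ADE-5}, namely $\ZZ/p\ZZ\rtimes\ZZ/2\ZZ$. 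As written, your treatment of type~\ref{theorem:ADE-1} hinges on ``settling'' this false claim, so that piece of the argument is broken; the repair is simply to delete the entire discussion, since the rotation subgroup (characteristic when $|G|>4$, being generated by all elements of order greater than $2$) already finishes the case with index $2$.
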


\begin{proof}
We go through the list of possible finite subgroups of
$$
\Aut(\cC)\subset\Aut\big(\cC_{\bar{\KKK}}\big)\cong\PGL_2\left(\bar{\KKK}\right)
$$
provided by Theorem~\ref{theorem:ADE}.
In most cases it will be enough to take the order of the group~$G$
as an upper bound for the smallest index of a characteristic abelian
subgroup therein.

If $G$ is of type~\ref{theorem:ADE-1}, then either $|G|=4$, and~$G$ is abelian itself, or $|G|>4$, and $G$ contains a characteristic cyclic subgroup of index~$2$.
If $G$ is of type~\ref{theorem:ADE-2}, then $|G|\le 60$.
If~$G$ is of type~\ref{theorem:ADE-3} or~\ref{theorem:ADE-4}, then
$$
|G|\le |\PGL_2(\FFF_q)|=q(q^2-1)
$$
by Lemma~\ref{lemma:PSL}.
If $G$ is of type~\ref{theorem:ADE-5}, then there is a unique $p$-Sylow subgroup $G_p$ in $G$, which is therefore characteristic;
the index of $G_p$ in $G$ is at most $q^2-1$ by Lemma~\ref{lemma:PGL2-Fqt}.
Taking the maximum of the above bounds, we obtain the required value of~$J$.
\end{proof}

As a by-product of Lemma~\ref{lemma:ADE}, we obtain

\begin{corollary}\label{corollary:PGL2-Fqt-Jordan}
Let $\KKK\supset \FFF_q$ be a purely transcendental field extension. Then
the group~\mbox{$\PGL_2(\KKK)$} is Jordan.
\end{corollary}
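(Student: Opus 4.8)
The plan is to deduce the corollary from Lemma~\ref{lemma:ADE} by exhibiting $\PGL_2(\KKK)$ as the automorphism group of a single suitably chosen conic over $\KKK$. The natural choice is the \emph{split} conic: the projective line $\PP^1_\KKK$ admits a closed embedding into $\PP^2$ as a smooth curve of degree $2$ (for instance as the image of the Veronese map $(x:y)\mapsto(x^2:xy:y^2)$, cut out by the equation $XZ=Y^2$), so it qualifies as a conic $\cC$ in the sense of this paper. First I would record that, because $\cC$ carries a $\KKK$-rational point, it is the split form of a conic, and that the automorphism group of $\cC$ over $\KKK$ is exactly $\Aut(\cC)\cong\PGL_2(\KKK)$ (every automorphism of $\cC$ preserves $-K_\cC$, hence is induced by a linear automorphism of the ambient $\PP^2$ fixing $\cC$).

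With this identification in hand, the argument is immediate. Any finite subgroup $G\subset\PGL_2(\KKK)$ is a finite subgroup of $\Aut(\cC)$, so by Lemma~\ref{lemma:ADE} it contains a characteristic abelian subgroup $A$ of index at most
$$
J=\max\{q(q^2-1),\,60\}.
$$
Since a characteristic subgroup is invariant under every automorphism of $G$, and in particular under every inner automorphism, the subgroup $A$ is normal in $G$. Thus every finite subgroup of $\PGL_2(\KKK)$ contains a normal abelian subgroup of index at most $J$, which is precisely the statement that $\PGL_2(\KKK)$ is Jordan, with Jordan constant bounded by $J$.

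I do not expect any real obstacle here, since the substantive work has already been carried out in Lemma~\ref{lemma:ADE}; this is genuinely a by-product. The only two points that warrant a line of care are the harmless verification that $\PGL_2(\KKK)$ does arise as $\Aut(\cC)$ for the split conic $\cC$ over $\KKK$, and the observation that the \emph{characteristic} subgroup produced by Lemma~\ref{lemma:ADE} is in particular \emph{normal}, so that the index bound transfers to the Jordan constant without any further argument.
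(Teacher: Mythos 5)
Your proof is correct and matches the paper's intended argument: the corollary is stated there precisely as a by-product of Lemma~\ref{lemma:ADE}, obtained by identifying $\PGL_2(\KKK)$ with the automorphism group of the split conic (equivalently, of $\PP^1_\KKK$ embedded by the Veronese map) and noting that a characteristic abelian subgroup is in particular normal. The uniform bound $J=\max\{q(q^2-1),\,60\}$ you extract is exactly the Jordan constant bound implicit in the paper.
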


\section{Del Pezzo surfaces}
\label{section:DP}

In this section we study automorphism groups of del Pezzo surfaces over finite fields.
Let us start with analyzing several particular cases.

\begin{lemma}\label{lemma:quadric}
Let $S$ be a surface over $\FFF_q$ such that
$S_{\bar{\FFF}_q}\cong\PP^1\times\PP^1$.
Then
$$
|\Aut(S)|\le 2q^2(q^4-1).
$$
\end{lemma}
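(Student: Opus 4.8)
The plan is to bound $\Aut(S)$ by relating it to the geometry of $S_{\bar{\FFF}_q}\cong\PP^1\times\PP^1$. The key point is that $S$ is a form of the quadric, so $\Aut(S)$ is controlled by the Galois-equivariant automorphisms of $\PP^1\times\PP^1$. First I would recall from Example~\ref{example:P1xP1} that
$$
\Aut\big(S_{\bar{\FFF}_q}\big)\cong\big(\PGL_2(\bar{\FFF}_q)\times\PGL_2(\bar{\FFF}_q)\big)\rtimes\ZZ/2\ZZ,
$$
where the factor $\ZZ/2\ZZ$ swaps the two rulings. There is a natural injection $\Aut(S)\hookrightarrow\Aut\big(S_{\bar{\FFF}_q}\big)$, and the image lands in the subgroup of elements commuting with the Galois action of $\Gal(\bar{\FFF}_q/\FFF_q)$ on $S_{\bar{\FFF}_q}$.

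Next I would split the analysis according to whether an automorphism preserves each of the two rulings individually or interchanges them. This gives an exact sequence
$$
1\longrightarrow \Aut^0(S)\longrightarrow \Aut(S)\longrightarrow \ZZ/2\ZZ,
$$
where $\Aut^0(S)$ is the index-at-most-$2$ subgroup preserving both projections (up to Galois twisting). The main content is to bound $|\Aut^0(S)|$. The two rulings are either both defined over $\FFF_q$, or they are conjugate over the quadratic extension $\FFF_{q^2}$ and swapped by the Galois generator. In the first (split) case each projection $S\to\PP^1$ is defined over $\FFF_q$, so $\Aut^0(S)$ embeds into $\PGL_2(\FFF_q)\times\PGL_2(\FFF_q)$, giving
$$
|\Aut^0(S)|\le |\PGL_2(\FFF_q)|^2=q^2(q^2-1)^2,
$$
and hence $|\Aut(S)|\le 2q^2(q^2-1)^2\le 2q^2(q^4-1)$. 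In the second (non-split) case the ruling defined over $\FFF_{q^2}$ gives an embedding of $\Aut^0(S)$ into $\PGL_2(\FFF_{q^2})$ compatible with the order-$2$ Galois descent, so
$$
|\Aut^0(S)|\le |\PGL_2(\FFF_{q^2})|=q^2(q^4-1),
$$
and again $|\Aut(S)|\le 2q^2(q^4-1)$.

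The hard part will be setting up the non-split case correctly: here $S$ is not $\PP^1\times\PP^1$ over $\FFF_q$ but a nontrivial form of it (so that $\Pic(S)$ has rank $1$), and one must identify the single $\FFF_q$-rational conic bundle structure $S\to C$, where $C$ is a conic over $\FFF_q$ (possibly without rational points), whose base change to $\FFF_{q^2}$ recovers one of the two rulings. An automorphism of $S$ acts on the base $C$ and on the generic fiber, and the whole group $\Aut^0(S)$ embeds into $\Aut(C_{\FFF_{q^2}})\cong\PGL_2(\FFF_{q^2})$ via the action on the $\FFF_{q^2}$-rational ruling. I would verify that this embedding is well-defined and that the extra order-$2$ factor accounts precisely for the Galois swap, so that the overall index bound of $2$ is not exceeded. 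Once both cases yield $|\Aut(S)|\le 2q^2(q^4-1)$, the lemma follows by taking the maximum, which is $2q^2(q^4-1)$ since $q^2(q^2-1)^2\le q^2(q^4-1)$.
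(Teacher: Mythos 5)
Your proposal is essentially correct, but it follows a genuinely different route from the paper. The paper simply quotes the classification of forms of $\PP^1\times\PP^1$ from \cite[Lemma~3.4]{SV}: either $\rk\Pic(S)=2$ and $S$ is a product of two conics over $\FFF_q$ (hence $S\cong\PP^1\times\PP^1$, since every conic over a finite field has a point), or $\rk\Pic(S)=1$ and $S$ is the Weil restriction $\operatorname{R}_{\FFF_{q^2}/\FFF_q}Q$ of a conic $Q\cong\PP^1_{\FFF_{q^2}}$, in which case $\Aut(S)\cong\PGL_2(\FFF_{q^2})\rtimes\ZZ/2\ZZ$ exactly, again by \cite{SV}. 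You instead run Galois descent directly on the two rulings: the subgroup $\Aut^0(S)$ of index at most $2$ preserving each ruling embeds into $\PGL_2(\FFF_q)\times\PGL_2(\FFF_q)$ in the split case and into $\PGL_2(\FFF_{q^2})$ in the non-split case. The case division is the same (your split/non-split dichotomy is exactly $\rk\Pic(S)=2$ versus $\rk\Pic(S)=1$), but your argument is self-contained where the paper's is a citation, at the price of producing only an upper bound rather than the exact automorphism group --- which is all the lemma requires.

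Two points in your plan need repair or completion. First, in the non-split case your phrase ``the single $\FFF_q$-rational conic bundle structure $S\to C$, where $C$ is a conic over $\FFF_q$'' is incorrect: when $\rk\Pic(S)=1$ there is no conic bundle structure defined over $\FFF_q$ at all, since the only extremal contraction is to a point. The correct object is the ruling over $\FFF_{q^2}$: its class in $\Pic\bigl(S_{\bar{\FFF}_q}\bigr)$ is invariant under $\Gal\bigl(\bar{\FFF}_q/\FFF_{q^2}\bigr)$, and because $\Br(\FFF_{q^2})=0$ it is represented by a divisor defined over $\FFF_{q^2}$, giving a morphism $S_{\FFF_{q^2}}\to\PP^1_{\FFF_{q^2}}$. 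Second, the injectivity of the map $\Aut^0(S)\to\PGL_2(\FFF_{q^2})$ given by the action on one ruling, which you only promise to verify, genuinely requires the Galois symmetry: if $g\in\Aut^0(S)$ acts trivially on the base of the first ruling, then, since $g$ is defined over $\FFF_q$, it commutes with the Frobenius of $\FFF_{q^2}/\FFF_q$, which interchanges the two rulings; hence $g$ also acts trivially on the base of the second ruling, and since the product of the two projections identifies $S_{\bar{\FFF}_q}$ with $\PP^1\times\PP^1$, this forces $g=\id$. With these fixes, both of your cases give $|\Aut(S)|\le 2q^2(q^4-1)$ as claimed, and the proof goes through.
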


\begin{proof}
We use the classification of del Pezzo surfaces
of degree $8$ over arbitrary fields, see for instance~\mbox{\cite[Lemma~3.4(i)]{SV}}.
Namely, if $S$ is a surface over a field $\KKK$  such that~\mbox{$S_{\bar{\KKK}}\cong\PP^1\times\PP^1$},
then either $\rk\Pic(S)=2$ and $S$ is a product of two conics,
or~\mbox{$\rk\Pic(S)=1$} and $S$ is a Weil restriction of scalars of a conic defined over some quadratic
extension of~$\KKK$.

Suppose that $\rk\Pic(S)=2$. Then $S\cong C_1\times C_2$, where $C_1$ and $C_2$ are conics over $\FFF_q$.
Recall that every conic over a finite field has a point. Thus, we have $S\cong\PP^1\times\PP^1$, so that
$$
|\Aut(S)|=2q^2(q^2-1)^2<2q^2(q^4-1).
$$

Now suppose that $\rk\Pic(S)=1$. Then $S$ is isomorphic to the Weil restriction of scalars
$$
S\cong \operatorname{R}_{\FFF_{q^2}/\FFF_q}Q,
$$
where $Q$ is a conic over $\FFF_{q^2}$.
As before, $Q$ has a point over $\FFF_{q^2}$, so that~\mbox{$Q\cong\PP^1_{\FFF_{q^2}}$}. Thus, we have
$$
\Aut(S)\cong \Aut(Q)\rtimes\ZZ/2\ZZ\cong \PGL_2(\FFF_{q^2})\rtimes\ZZ/2\ZZ,
$$
see \cite[Lemma~3.4(iii),(iv)]{SV}.
Therefore, we compute
$$
|\Aut(S)|=2q^2(q^4-1).
$$
\end{proof}

\begin{lemma}\label{lemma:dP2}
Let $S$ be a del Pezzo surface of degree $2$ over $\FFF_q$.
Then $\Aut(S)$ contains a normal abelian subgroup
of index at most $q^3(q^2-1)(q^3-1)$.
\end{lemma}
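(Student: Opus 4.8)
The plan is to use the anticanonical double cover structure of $S$. Since $S$ is a del Pezzo surface of degree $2$, the linear system $|-K_S|$ is base point free and defines a finite morphism $\phi\colon S\to\PP^2$ of degree $(-K_S)^2=2$; here the target is the projectivization of the three-dimensional $\FFF_q$-vector space $H^0(S,-K_S)$, so it is honestly $\PP^2_{\FFF_q}$ (in particular there is no Brauer--Severi twisting, which in any case could not occur over a finite field since $\Br(\FFF_q)=0$). As $-K_S$ is intrinsic to $S$, the morphism $\phi$ is equivariant for the action of $\Aut(S)$, and thus it induces a homomorphism
$$
\rho\colon\Aut(S)\longrightarrow\Aut(\PP^2_{\FFF_q})\cong\PGL_3(\FFF_q).
$$

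I would then analyze the kernel $K=\ker\rho$. By construction $K$ consists of the automorphisms of $S$ over $\PP^2$, that is, the deck transformations of the degree $2$ cover $\phi$. For a degree $2$ cover this group is either trivial or cyclic of order $2$ generated by the Geiser involution $\gamma$; hence $K$ is abelian. It is also central: for any $g\in\Aut(S)$ the composite $g\gamma g^{-1}$ again satisfies $\phi\circ(g\gamma g^{-1})=\phi$, so it is a deck transformation and therefore equals $\gamma$. In particular $K$ is a normal abelian subgroup of $\Aut(S)$.

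To finish, I would bound the index of $K$. Its image $\rho(\Aut(S))$ is a subgroup of the finite group $\PGL_3(\FFF_q)$, so
$$
[\Aut(S):K]=|\rho(\Aut(S))|\le|\PGL_3(\FFF_q)|=q^3(q^2-1)(q^3-1),
$$
and $K$ is the required subgroup.

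The step I expect to be most delicate is the behaviour in characteristic $2$. There one must make sure that $|-K_S|$ still defines a degree $2$ morphism to $\PP^2$ and that its deck transformation group is at most $\ZZ/2\ZZ$, even when the cover is purely inseparable (in which case $K$ is trivial and $\Aut(S)$ injects into $\PGL_3(\FFF_q)$). Granting this, the argument is otherwise formal: the whole point is that every automorphism of $S$ is visible on the anticanonical $\PP^2$ up to the central involution $\gamma$, and the order of $\PGL_3(\FFF_q)$ is exactly the claimed bound.
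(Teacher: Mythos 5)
Your proposal is correct and follows essentially the same route as the paper: both use the $\Aut(S)$-equivariant anticanonical double cover $S\to\PP^2$, observe that the kernel of the induced map to $\Aut(\PP^2)$ is trivial or $\ZZ/2\ZZ$ (trivial when the cover is inseparable, which can only happen in characteristic $2$), and bound the index by $|\PGL_3(\FFF_q)|=q^3(q^2-1)(q^3-1)$. Your extra remarks on centrality of the kernel and on the inseparable case only make explicit what the paper states in passing.
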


\begin{proof}
The morphism defined by the anticanonical linear system $|-K_S|$ is a double cover~\mbox{$\kappa\colon S\to \PP^2$}.
The kernel $\Delta$ of the natural homomorphism
$\theta\colon \Aut(S)\to \Aut(\PP^2)$ is a normal subgroup of $\Aut(S)$.
Note that $\Delta$ either is isomorphic to $\ZZ/2\ZZ$ or is trivial, depending on
whether the morphism $\kappa$ is separable or not.
The index of $\Delta$ equals
\[
|\theta(\Aut(S))|\le
|\Aut(\PP^2)|=q^3(q^2-1)(q^3-1).
\qedhere
\]
\end{proof}

\begin{corollary}\label{corollary:dP-intermediate}
Let $S$ be a del Pezzo surface of degree $K_S^2$ different from~$1$ and~$3$ over~$\FFF_q$.
Then~$\Aut(S)$ contains a normal abelian subgroup of index at most~\mbox{$q^3(q^2-1)(q^3-1)$}.
\end{corollary}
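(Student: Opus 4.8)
The plan is to treat Corollary~\ref{corollary:dP-intermediate} as a straightforward case-by-case synthesis: since the degree $d=K_S^2$ of a del Pezzo surface over a field satisfies $1\le d\le 9$, excluding $d=1$ and $d=3$ leaves exactly the degrees $d\in\{2,4,5,6,7,8,9\}$, and I must produce, in each case, a normal abelian subgroup of $\Aut(S)$ of index at most $q^3(q^2-1)(q^3-1)$. Several of these are already disposed of: degree $2$ is precisely Lemma~\ref{lemma:dP2}, which gives the claimed bound directly. For degrees $7$, $9$, and the $\PP^1\times\PP^1$-type surfaces of degree $8$, the automorphism groups are small and explicit, and the bound should follow by comparing orders. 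The main work is to organize these remaining cases and check, in each, that the relevant order estimate does not exceed $q^3(q^2-1)(q^3-1)=|\Aut(\PP^2)|$.

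First I would dispose of the highest degrees. For $d=9$ one has $S_{\bar\FFF_q}\cong\PP^2$; if $S$ itself is $\PP^2$ then $\Aut(S)\cong\PGL_3(\FFF_q)$ has order exactly $q^3(q^2-1)(q^3-1)$, so one may take the trivial subgroup and the bound holds with equality (and more generally any $\FFF_q$-form is a Severi--Brauer surface with at most this many automorphisms). For $d=8$ with $S_{\bar\FFF_q}\cong\PP^1\times\PP^1$, Lemma~\ref{lemma:quadric} already gives $|\Aut(S)|\le 2q^2(q^4-1)=2q^2(q^2-1)(q^2+1)$; I would verify the elementary inequality $2q^2(q^2-1)(q^2+1)\le q^3(q^2-1)(q^3-1)$, i.e.\ $2(q^2+1)\le q(q^3-1)$, which holds for all $q\ge 2$, so again the trivial subgroup works. (The other form of degree $8$, where $S_{\bar\FFF_q}$ is the blow-up of a point, has $S=\PP^2$ blown up at an $\FFF_q$-point, and its automorphism group is a subgroup of $\PGL_3(\FFF_q)$, hence bounded by the same quantity.) For $d=7$ the surface $S_{\bar\FFF_q}$ is $\PP^2$ blown up at two points, $\Aut$ is solvable of small order, and the order bound is immediate.

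For the intermediate degrees $d=4,5,6$ the pattern is the same: bound $|\Aut(S)|$ and compare with $q^3(q^2-1)(q^3-1)$. For $d=5$, Example~\ref{example:dP5} already notes $\Aut\cong\Sym_5$ of order $120$ in the split case, and in general $\Aut(S_{\bar\FFF_q})$ embeds into $\mathrm{W}(\mathrm{D}_5)\cong\Sym_5$, so $|\Aut(S)|\le 120\le q^3(q^2-1)(q^3-1)$ for every $q\ge 2$. For $d=6$ one has $\Aut(S_{\bar\FFF_q})\cong(\FFF_q^*)^2\rtimes(\Sym_3\times\ZZ/2\ZZ)$ of order $12(q-1)^2$, whose toric part is a normal abelian subgroup of index dividing $12$. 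For $d=4$ the surface is an intersection of two quadrics; over $\bar\FFF_q$ the group $\Aut$ is an extension of a subgroup of $\mathrm{W}(\mathrm{D}_5)$ acting on the sixteen lines by the kernel $(\ZZ/2\ZZ)^4$ coming from the Geiser-type involutions, and the normal abelian $2$-subgroup fits the required index bound once the order estimate is made explicit. In each of these I expect only routine order arithmetic.

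The main obstacle, and the only place demanding genuine care, is the degree~$4$ case: one must correctly identify a \emph{normal} abelian subgroup (not merely an abelian subgroup of small index) and verify its normality over the possibly non-closed field $\FFF_q$, where Galois twisting can alter which subgroups are defined over the base. I would handle this by taking the kernel of the action of $\Aut(S)$ on the set of the two pencils of conics (or equivalently on $\Pic(S)$), which is manifestly normal and which one checks to be abelian of index at most a universal constant; the subtlety is ensuring that the chosen subgroup descends to $\FFF_q$, and I would argue this via $\Gal(\bar\FFF_q/\FFF_q)$-equivariance of the characteristic construction. Everything else reduces to the inequality $|\Aut(S)|\le q^3(q^2-1)(q^3-1)$, which the preceding lemmas and examples make transparent.
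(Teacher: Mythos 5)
Your overall strategy coincides with the paper's: run through the degrees $d\in\{2,4,5,6,7,8,9\}$, handle $d=2$ by Lemma~\ref{lemma:dP2}, reduce $d=7,8,9$ to $\Aut(\PP^2)$ and Lemma~\ref{lemma:quadric}, use the torus part for $d=6$, and compare everything with $q^3(q^2-1)(q^3-1)\ge 168$. Those cases are essentially fine, modulo two harmless slips: for $d=6$ the geometric automorphism group is $(\bar\FFF_q^*)^2\rtimes(\Sym_3\times\ZZ/2\ZZ)$, not $(\FFF_q^*)^2\rtimes(\Sym_3\times\ZZ/2\ZZ)$; and for $d=5$ the relevant Weyl group is $\mathrm{W}(\mathrm{A}_4)\cong\Sym_5$ of order $120$ --- the group $\mathrm{W}(\mathrm{D}_5)$ has order $1920$ --- though your conclusion $|\Aut(S)|\le 120$ is correct.

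The genuine gap is the case $d=4$, which you yourself single out as the delicate one, and your argument there does not work as written. First, the structural claim is false: for a del Pezzo surface of degree at most $5$ the action of $\Aut(S)$ on $\Pic(S_{\bar\FFF_q})$, equivalently on the sixteen lines, is \emph{faithful} (an automorphism fixing every exceptional class descends to an automorphism of $\PP^2$ fixing five points in general position, hence is trivial). So $(\ZZ/2\ZZ)^4$ is not the kernel of the action on the lines, and your proposed normal subgroup, ``the kernel of the action on $\Pic(S)$,'' is the trivial group; it yields only the bound $|\Aut(S)|$, which you never estimate. Second, a quartic del Pezzo surface has ten conic classes forming five pairs, not two pencils. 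Third, and most seriously, the $(\ZZ/2\ZZ)^4$ generated by the involutions attached to the five singular quadrics of the pencil is a characteristic-$\neq 2$ phenomenon, whereas the corollary must cover even $q$; in characteristic $2$ the quotient of $\Aut(S)$ by its maximal normal abelian subgroup can be as large as $\Alt_5\cong\PGL_2(\FFF_4)$, and the crude bound $|\Aut(S)|\le|\mathrm{W}(\mathrm{D}_5)|=1920$ is useless for $q=2$ since $1920>168$. This is precisely why the paper invokes \cite[Theorem~3.1]{DD}, valid in all characteristics, giving a normal abelian subgroup of index at most $60$. A correct characteristic-free repair along your lines does exist: embed $\Aut(S)\hookrightarrow\mathrm{W}(\mathrm{D}_5)\cong(\ZZ/2\ZZ)^4\rtimes\Sym_5$ via the (faithful) action on $\Pic(S_{\bar\FFF_q})$, and take $A=\Aut(S)\cap(\ZZ/2\ZZ)^4$, i.e.\ the kernel of the induced action on the five \emph{pairs} of conic classes; this $A$ is normal, abelian, and of index at most $|\Sym_5|=120\le 168\le q^3(q^2-1)(q^3-1)$.
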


\begin{proof}
Note that
$$
q^3(q^2-1)(q^3-1)\ge 2^3(2^2-1)(2^3-1)=168.
$$
Let $d=K_S^2$ be the degree of $S$. It is well known that $1\le d\le 9$.

Suppose that $d=9$, so that $S$ is a Severi--Brauer surface.
Since $\FFF_q$ is a $\mathrm{C}_1$-field by the Chevalley--Warning theorem, see e.g. \cite[\S\,I.2]{Serre1973},
the surface $S$ has an $\FFF_q$-point. Thus~$S$ is isomorphic to $\PP^2$, see~\mbox{\cite[Corollary~13]{Kollar-SB}}.
Hence
$$
|\Aut(S)|=|\PGL_3(\FFF_q)|=q^3(q^2-1)(q^3-1).
$$

Suppose that $d=8$ and $S_{\bar{\FFF}_q}\cong\PP^1\times\PP^1$.
Then
$$
|\Aut(S)|\le 2q^2(q^4-1),
$$
see Lemma~\ref{lemma:quadric}.
Note that
$$
2q^2(q^4-1)<q^3(q^2-1)(q^3-1)
$$
for any $q$.

Suppose that either $d=8$ and $S_{\bar{\FFF}_q}\not\cong\PP^1\times\PP^1$, or $d=7$.
Then there is an $\Aut(S)$-equivariant birational morphism $S\to\PP^2$.
This implies that the bound obtained in the case $d=9$
applies to this case as well.

Suppose that $d=6$. Then
$$
\Aut\big(S_{\bar{\FFF}_q}\big)\cong \left(\bar{\FFF}_q^*\right)^2\rtimes \left(\mathfrak{S}_3\times\ZZ/2\ZZ\right),
$$
see for instance~\mbox{\cite[Theorem~8.4.2]{Dolgachev}}.
This implies that $\Aut(S_{\bar{\FFF}_q})$, and thus also~\mbox{$\Aut(S)$}, contains a normal abelian subgroup of index
at most
$$
|\mathfrak{S}_3\times\ZZ/2\ZZ|=12.
$$

Suppose that $d=5$. Then $\Aut(S_{\bar{\FFF}_q})\cong\mathfrak{S}_5$, see~\mbox{\cite[Theorem~8.5.8]{Dolgachev}}.
Thus, we have
$$
|\Aut(S)|\le |\mathfrak{S}_5|=120.
$$

Suppose that $d=4$. Then $\Aut(S)$ contains a normal abelian subgroup of index at most~$60$, see \cite[Theorem~3.1]{DD}.

Finally, suppose that $d=2$. Then $\Aut(S)$ contains a normal abelian subgroup
of index at most $q^3(q^2-1)(q^3-1)$ by Lemma~\ref{lemma:dP2}.
\end{proof}

For the next case, we assume that the characteristic of the base field is odd.

\begin{lemma}\label{lemma:dP1}
Let $S$ be a del Pezzo surface of degree $1$ over $\FFF_q$.
Suppose that $q$ is odd. Then $\Aut(S)$ contains a normal abelian subgroup
of index at most $4096$.
\end{lemma}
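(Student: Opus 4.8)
The plan is to use the elliptic fibration coming from the anticanonical pencil. Since $K_S^2=1$, the system $|-K_S|$ is a pencil with a single base point $P_0$ (necessarily $\FFF_q$-rational), and passing to $|-2K_S|$ and $|-3K_S|$ realizes $S$ as a sextic hypersurface $z^2=y^3+F_4y+F_6$ in the weighted projective space $\PP(1,1,2,3)$ (for $\Char\neq 3$; in characteristic $3$ one keeps a general cubic $z^2=y^3+a_2y^2+a_4y+a_6$ in $y$), where $F_4,F_6$ are binary forms of degrees $4$ and $6$ in the weight-one coordinates $x_0,x_1$. The Bertini involution $\beta\colon z\mapsto -z$ is central in $\Aut(S)$, and the key object is the homomorphism $\phi\colon\Aut(S)\to\PGL_2(\FFF_q)$ recording the action on the base $\PP^1$ of this pencil (equivalently on the ruling of the cone $\PP(1,1,2)$ obtained from $|-2K_S|$).

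First I would control $N=\ker\phi$. An element of $N$ preserves every fibre and fixes the section through $P_0$, hence acts on the generic fibre as an automorphism of an elliptic curve fixing the origin; such groups are cyclic of order dividing $6$ away from characteristic $3$ (and of bounded order in characteristic $3$ as well), so $N$ is a bounded abelian group, equal to $\langle\beta\rangle\cong\ZZ/2\ZZ$ unless the fibration is isotrivial with $j\equiv 0$ or $j\equiv 1728$. Next I would bound the image $\bar G=\phi(\Aut(S))$. Smoothness of $S$ forces the fibration to have only \emph{irreducible} singular fibres (types $I_1$ and $II$), so $\bar G$ preserves the finite set $\Sigma$ of the corresponding points of $\PP^1$; since the pointwise stabiliser of three points in $\PGL_2$ is trivial, $\bar G$ acts faithfully on $\Sigma$. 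In the \emph{tame} case $\Char\neq 2,3$ a type $II$ fibre contributes exactly $2$ to $\deg(4F_4^3+27F_6^2)=12$, whence $|\Sigma|\ge 6$; moreover a nontrivial unipotent element would force $F_4,F_6$ to be powers of a single linear form and the discriminant to be $c\cdot x_1^{12}$, violating the fibre-multiplicity bound. Thus for odd $q$ with $\Char\neq 3$ the order of $\bar G$ is prime to $p$, so Theorem~\ref{theorem:ADE} leaves only cyclic, dihedral, $\Alt_4$, $\Sym_4$ and $\Alt_5$, and the requirement that $F_4,F_6$ be nonzero relative invariants of degrees $4$ and $6$ eliminates $\Alt_5$ (and $\Sym_4$ away from the isotrivial locus). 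Hence $\bar G$ is absolutely bounded, and so is $|\Aut(S)|$.

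The hard part will be characteristic $3$, which is exactly why the bound is an absolute constant rather than something of order $\sim q$. There wild ramification allows a type $II$ fibre to have large $v(\Delta)$, so $\Sigma$ may consist of only one or two points and the faithful-action argument no longer bounds $\bar G$: the group can be large, lying in a Borel or in the normalizer of a torus of $\PGL_2(\FFF_q)$. In this case I would not aim for boundedness but extract a \emph{large normal abelian} subgroup of bounded index — the unipotent radical $\bar G\cap\mathbb{G}_a$ (an elementary abelian $3$-group, matching Example~\ref{example:Fn}) when $|\Sigma|=1$, or the cyclic torus part when $|\Sigma|=2$ — after checking, as in the tame unipotent computation, that the complementary diagonal factor is forced to be of bounded order by preservation of $F_4$ and $F_6$. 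The groups of type~\ref{theorem:ADE-5} must therefore be analysed directly here, the Artin–Schreier invariants $x_0^3-x_0x_1^2$ of a unipotent replacing the naive ``power of a linear form''.

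Finally I would assemble the extension $1\to N\to\Aut(S)\to\bar G\to 1$, in which $N$ is a bounded central abelian group. Taking the preimage under $\phi$ of a normal abelian subgroup of $\bar G$ of controlled index (the cyclic subgroup of index $\le 2$ for cyclic or dihedral $\bar G$, the unipotent radical in the wild case) and intersecting with the centraliser of $N$ produces a normal subgroup on which $N$ is central with cyclic, hence abelian, quotient; a central extension of a cyclic group being abelian, this subgroup is abelian and normal. Multiplying the index contributions — those of $N$ in its centraliser, of the abelian part inside $\bar G$, and the possible $|\Sigma|=2$ swapping factor — yields the stated bound $4096$. The principal obstacle, and where the genuine work lies, is the characteristic-$3$ analysis: showing that even when $\bar G$ is large its non-abelian part stays absolutely bounded, and then extracting a subgroup that is simultaneously abelian, normal, and of index within the claimed constant.
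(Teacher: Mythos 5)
Your route (anticanonical elliptic pencil, kernel $N$ acting on the generic fibre, image $\bar G\subset\PGL_2(\FFF_q)$) is genuinely different from the paper's, but it is not a proof: the gap is exactly where you yourself locate ``the genuine work''. The characteristic-$3$ case is left as a plan (``I would\dots'', ``after checking\dots''), and its key assertion --- that whenever the unipotent part of $\bar G$ is nontrivial the complementary diagonal factor has absolutely bounded order --- is never established; yet this is the only case in which the statement is not an easy consequence of your tame analysis. Moreover, the constant $4096$ is never derived: you assert at the end that the various index contributions ``multiply to $4096$'', but none of them is computed, and nothing in your approach singles out this number. In the paper it has a very specific origin: for odd $q$ the morphism given by $|-2K_S|$ is a \emph{separable} double cover $\kappa\colon S\to Y\subset\PP^3$ of a quadric cone, the kernel $\Delta$ of $\Aut(S)\to\Aut(Y)$ is central, the quotient $\Aut(S)/\Delta$ acts faithfully on the branch curve $C$, which has genus $4$, and Stichtenoth's theorem gives $|\Aut(C)|\le 16\cdot 4^4=4096$ since $4$ is not of the form $\frac{1}{2}p^n(p^n-1)$. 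That argument is uniform in all odd characteristics (including $3$) and avoids the elliptic-fibration case analysis entirely.

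There are also concrete errors in your ``tame'' part. For $p=5$ the claim that a nontrivial unipotent element forces $F_4,F_6$ to be powers of a single linear form is false: $F_4=cx_0^4$ and $F_6=x_0x_1^5-x_0^5x_1$ are invariant under $x_1\mapsto x_1+x_0$, and in fact $z^2=y^3+x_0x_1^5-x_0^5x_1$ is a smooth del Pezzo surface of degree $1$ in characteristic $5$ on which $\SL_2(\FFF_5)$ acts (the Dickson form $x_0x_1^5-x_0^5x_1$ is an $\SL_2(\FFF_5)$-invariant of degree $6$), so $\bar G$ can contain $\PSL_2(\FFF_5)\cong\Alt_5$; your invariant-theoretic elimination of $\Alt_5$ is a characteristic-zero statement. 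These exceptional groups happen to be of bounded order, so the conclusion survives, but the case analysis as written is wrong. Finally, the concluding assembly needs $N$ to be central and the chosen normal abelian subgroup of $\bar G$ to be cyclic: in characteristic $3$ the kernel $N$ need not even be abelian (it can be $\ZZ/3\ZZ\rtimes\ZZ/4\ZZ$ for an isotrivial supersingular fibration) and is not central in general (only the Bertini involution is), and for $\bar G\cong\Alt_4$ or $\Sym_4$ the relevant normal abelian subgroup is the noncyclic group $(\ZZ/2\ZZ)^2$, so ``a central extension of a cyclic group is abelian'' does not apply. Each of these defects might be patched, but together with the missing characteristic-$3$ analysis the proposal does not yet prove the lemma.
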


\begin{proof}
The morphism defined by the linear system $|-2K_S|$ is a double cover~\mbox{$\kappa\colon S\to Y$}, where $Y\subset \PP^3$ is a quadratic cone.
The kernel $\Delta$ of the natural homomorphism~\mbox{$\Aut(S)\to \Aut(Y)$} is a central subgroup of $\Aut(S)$; in particular, it is normal and abelian.
Since $q$ is odd, the morphism $\kappa$ is separable.
Therefore, its branch locus consists of the singular
point of $Y$ and a smooth curve~$C$ cut out on $Y$ by a cubic surface, see for instance~\mbox{\cite[Theorem~III.3.5.1]{Kollar-Curves}}.
Thus, $C$ is not contained in a plane in $\PP^3$, and so the action of
the group~\mbox{$\Gamma=\Aut(S)/\Delta$} on $C$ is faithful. On the other hand, the genus of $C$ equals $4$.
In particular, it cannot be represented in the form~\mbox{$\frac{1}{2}p^n(p^n-1)$}.
Hence, according to \cite{St73} one has~\mbox{$|\Aut(C)|\le 16\cdot 4^4=4096$}.
Thus, the group $\Aut(S)$ contains a central subgroup $\Delta$ of index~\mbox{$|\Gamma|\le |\Aut(C)|\le 4096$}.
\end{proof}

\begin{remark}
If $q$ is odd, then one can use the results of \cite{St73}
to obtain an alternative upper bound for the Jordan constant of the group $\Aut(S)$,
where $S$ is a del Pezzo surface of degree $2$ over $\FFF_q$. Namely, in this case the
anticanonical double cover $S\to \PP^2$ is separable, and its branch divisor is a smooth curve $C$ of genus~$3$.
As in the proof of Lemma~\ref{lemma:dP1}, the Jordan constant of $\Aut(S)$ does not exceed the order of the group $\Aut(C)$.
By \cite{St73}, one has $|\Aut(C)|\le 6048$. Note that this upper bound for the Jordan constant of $\Aut(S)$ is weaker than
the one provided by Lemma~\ref{lemma:dP2} if $q=3$, and is stronger for all other odd~$q$.
\end{remark}

The next assertion summarizes the information about Jordan constants of automorphism groups
of del Pezzo surfaces over finite fields.

\begin{proposition}\label{proposition:dP}
Let $S$ be a del Pezzo surface over $\FFF_q$.
Then $\Aut(S)$ contains a normal abelian subgroup of index at most
$$
J_{\mathrm{dP}}=\left\{
\begin{array}{ll}
q^3(q^2-1)(q^3-1),& \text{\ if $q$ is odd},\\
\max\{q^3(q^2-1)(q^3-1), |\WE|\},& \text{\ if $q$ is even}.
\end{array}
\right.
$$
\end{proposition}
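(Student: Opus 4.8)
The degree $d=K_S^2$ satisfies $1\le d\le 9$, and my plan is first to dispose of everything already available and then to isolate the two genuinely new cases. For $d\notin\{1,3\}$ the bound $q^3(q^2-1)(q^3-1)\le J_{\mathrm{dP}}$ is supplied by Corollary~\ref{corollary:dP-intermediate}, and for $d=1$ with $q$ odd Lemma~\ref{lemma:dP1} produces a normal abelian subgroup of index at most $4096$. Since $q^3(q^2-1)(q^3-1)=5616$ already when $q=3$ and only grows with $q$, this index never exceeds $J_{\mathrm{dP}}$ for odd $q$. Thus it remains to treat cubic surfaces ($d=3$) for all $q$, and surfaces of degree~$1$ in even characteristic.

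For both of these I would use the action of $\Aut(S)$ on $\Pic(S_{\bar{\FFF}_q})$. This action preserves $K_S$ and the intersection form, so it factors through the Weyl group $\mathrm{W}(\mathrm{E}_{9-d})$. For a cubic surface the action is faithful, since an automorphism fixing the classes of all $27$ lines must be the identity; hence $\Aut(S)$ embeds into $\mathrm{W}(\mathrm{E}_6)$ and $|\Aut(S)|\le|\mathrm{W}(\mathrm{E}_6)|=51840$. When $q$ is even this gives $|\Aut(S)|\le 51840\le|\WE|\le J_{\mathrm{dP}}$, and when $q\ge 5$ is odd one checks directly that $51840\le q^3(q^2-1)(q^3-1)$; in both situations the trivial subgroup works. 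The delicate value is $q=3$, where $q^3(q^2-1)(q^3-1)=5616<51840$, so one must exclude the large subgroups of $\mathrm{W}(\mathrm{E}_6)$ — above all the simple group $\mathrm{PSU}_4(\FFF_2)$ of order $25920$ — from arising as $\Aut(S)$. Here I would invoke the classification of automorphisms of cubic surfaces in characteristic~$3$ (cf.~\cite{DD}): the point is that the Fermat cubic, responsible for $\mathrm{PSU}_4(\FFF_2)$ in characteristic~$2$ (Example~\ref{example:cubic}), becomes singular in characteristic~$3$, so the order of $\Aut(S)$ stays well below $5616$.

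For $d=1$ with $q$ even the same homomorphism yields $\Aut(S)/K\hookrightarrow\WE$, where $K$ is the kernel of the action on $\Pic(S_{\bar{\FFF}_q})$; in particular $[\Aut(S):K]\le|\WE|$. The whole case therefore reduces to proving that $K$ is abelian, for then $K$ — being a kernel, hence normal — is exactly the normal abelian subgroup of index at most $|\WE|$ demanded by $J_{\mathrm{dP}}$. To analyze $K$ I would use the bianticanonical double cover $\kappa\colon S\to Y$ onto the quadric cone: each element of $K$ commutes with the Bertini involution, descends to a linear automorphism of $Y\subset\PP^3$, fixes the zero section, and acts trivially on the configuration of exceptional curves. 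In odd characteristic $\kappa$ is separable and this forces $K$ to be trivial, which is the mechanism behind Lemma~\ref{lemma:dP1}; it is precisely the inseparability of $\kappa$ in characteristic~$2$ that leaves room for $K$ to be non-trivial.

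The hard part will be controlling $K$ in characteristic~$2$. An element of $K$ restricts to an origin-fixing automorphism of the generic fibre of the anticanonical elliptic pencil; if the pencil is not isotrivial, that fibre admits only $\pm 1$, and since $-1$ is the Bertini involution, which does not lie in $K$, we get $K=1$. Hence a non-trivial $K$ can occur only for an isotrivial pencil, in which case $K$ embeds into the automorphism group of a single fibre fixing the origin. The obstacle is that in characteristic~$2$ a supersingular fibre has a non-abelian automorphism group of order $24$, so one must use triviality on $\Pic$ together with compatibility across the pencil to cut $K$ down to an abelian subgroup — or, failing that, to bound $|\Aut(S)|$ by $|\WE|$ outright, which is ample since such isotrivial surfaces are very special. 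The characteristic-$3$ cubic estimate of the preceding paragraph is a secondary obstacle, amounting to a direct appeal to the existing classification.
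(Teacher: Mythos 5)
Your overall skeleton matches the paper's (same treatment of $d\notin\{1,3\}$ via Corollary~\ref{corollary:dP-intermediate}, of $d=1$ with $q$ odd via Lemma~\ref{lemma:dP1}, and a Weyl-group embedding for cubics), but there is a genuine gap in the case $d=1$, $q$ even. You reduce that case to showing that the kernel $K$ of the representation $\Aut(S)\to \mathrm{W}(\mathrm{E}_8)$ on $\Pic(S_{\bar{\FFF}_q})$ is abelian, and you never close this reduction: your analysis via the bianticanonical cover and the anticanonical elliptic pencil ends with an acknowledged obstacle (supersingular fibres with non-abelian automorphism groups of order $24$ in characteristic~$2$) and an unjustified fallback (``such isotrivial surfaces are very special''). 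As written, the proposition is not proved for even $q$ and $d=1$. The missing ingredient is that $K$ is in fact \emph{trivial}, in every characteristic: for a del Pezzo surface of degree $d\le 5$, an automorphism acting trivially on $\Pic(S_{\bar{\FFF}_q})$ fixes each exceptional curve of a blow-down $S_{\bar{\FFF}_q}\to\PP^2$ (each being the unique effective representative of its class), hence descends to an element of $\PGL_3(\bar{\FFF}_q)$ fixing $9-d\ge 4$ points in general position, and is therefore the identity. This argument is characteristic-free; it is the content of \cite[Corollary~8.2.40]{Dolgachev}, which is exactly what the paper invokes: for $d\in\{1,3\}$ and $q$ even one gets $\Aut(S)\hookrightarrow\WE$, so $|\Aut(S)|\le|\WE|$ and the \emph{trivial} subgroup already has index at most $J_{\mathrm{dP}}$. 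With this fact your entire discussion of the Bertini involution, separability of $\kappa$, and isotrivial pencils becomes unnecessary; without it, your proof does not go through.

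A secondary soft spot is the cubic case at $q=3$: you appeal to the classification in \cite{DD} only through the heuristic that the Fermat cubic degenerates in characteristic~$3$, which by itself does not bound $|\Aut(S)|$. The precise statement needed (and the one the paper cites) is \cite[Theorem~1.1]{DD}: in odd characteristic every smooth cubic surface has $|\Aut(S)|\le 648<5616=3^3(3^2-1)(3^3-1)$. Granting that citation, your variant is actually a slight economy over the paper here, since you observe that for odd $q\ge 5$ the crude bound $|\Aut(S)|\le|\mathrm{W}(\mathrm{E}_6)|=51\,840\le q^3(q^2-1)(q^3-1)$ already suffices, so the classification of \cite{DD} is needed only for $q=3$.
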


\begin{proof}
Let $d=K_S^2$ be the degree of $S$.
If~\mbox{$d\not\in\{1, 3\}$}, then~$\Aut(S)$ contains a normal abelian subgroup of index at most~\mbox{$q^3(q^2-1)(q^3-1)$}
by Corollary~\ref{corollary:dP-intermediate}.
If $d=3$ and $q$ is odd, then~\mbox{$|\Aut(S)|\le 648$}, see \cite[Theorem~1.1]{DD}.
If $d=1$ and $q$ is odd, then $\Aut(S)$ contains a normal abelian subgroup
of index at most $4096$, see Lemma~\ref{lemma:dP1}.

It remains to take care of the cases $d=1$ and $d=3$ when $q$ is even.
In these cases (and actually for arbitrary $q$ and arbitrary $d\le 5$)
the group $\Aut(S)$ is isomorphic to a subgroup of the Weyl group $\WE$,
see for instance~\mbox{\cite[Corollary~8.2.40]{Dolgachev}}.
In particular, one has~\mbox{$|\Aut(S)|\le |\WE|$}.

Overall, we see that $\Aut(S)$ always contains a normal abelian subgroup of index at most
$$
J_{\mathrm{dP}}=\max\{q^3(q^2-1)(q^3-1), 4096\}=q^3(q^2-1)(q^3-1)
$$
if $q$ is odd,
and
$$
J_{\mathrm{dP}}=\max\{q^3(q^2-1)(q^3-1), |\WE|\}
$$
if $q$ is even.
\end{proof}

\section{Conic bundles}
\label{section:CB}

In this section we study automorphism groups of conic bundles over finite fields using the results of Section~\ref{section:conics},
and complete the proof of Theorem~\ref{theorem:main}.
We remind the reader that a conic bundle $\phi\colon S\to C$ is a proper morphism of a smooth geometrically irreducible surface
to a curve such that the anticanonical divisor of $S$ has positive intersection
index with every curve contained in a fiber of~$\phi$, and~\mbox{$\phi_*\mathcal{O}_X=\mathcal{O}_C$}.
The scheme-theoretic generic fiber of a conic bundle $\phi$ is a smooth conic over the field of rational functions on~$C$.

\begin{lemma}\label{lemma:CB}
Let $\phi\colon S\to \PP^1$ be a conic bundle over $\FFF_q$.
Denote by $\Aut(S;\phi)$ the group that consists of all automorphisms of $S$
mapping the fibers of $\phi$ again to the fibers of $\phi$. Then
$\Aut(S;\phi)$ contains a normal abelian subgroup of index at most
$$
J_{\mathrm{CB}}=q(q^2-1)\max\{q(q^2-1), 60\}.
$$
\end{lemma}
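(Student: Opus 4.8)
The plan is to use the conic bundle structure to split $\Aut(S;\phi)$ into a part acting on the base and a part acting fiberwise, the latter being governed by the generic fiber, to which Lemma~\ref{lemma:ADE} applies. Every element of $\Aut(S;\phi)$ permutes the fibers of $\phi$ and hence induces an $\FFF_q$-automorphism of the base. This gives a homomorphism
$$
\rho\colon \Aut(S;\phi)\longrightarrow \Aut\big(\PP^1_{\FFF_q}\big)\cong\PGL_2(\FFF_q).
$$
The image of $\rho$ lands in the finite group $\PGL_2(\FFF_q)$, so it has order at most $q(q^2-1)$. It therefore remains to control the kernel $K=\ker\rho$, which consists exactly of the automorphisms of $S$ over $\PP^1$, i.e.\ those acting trivially on the base and preserving every fiber.

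Restricting an element of $K$ to the scheme-theoretic generic fiber $\cC$ of $\phi$ yields a homomorphism $K\to\Aut(\cC)$, where $\cC$ is a conic over the function field $\KKK=\FFF_q(t)$, which is a purely transcendental extension of $\FFF_q$. This homomorphism is injective, since an automorphism of $S$ over $\PP^1$ is determined by its restriction to the dense generic fiber. Moreover $K$ is finite: it is the group of $\FFF_q$-points of the automorphism group scheme of $S$ over the proper base $\PP^1$, which (as $\phi$ is a conic bundle, so the relative automorphism group is a form of $\PGL_2$ and hence affine) is a linear algebraic group over $\FFF_q$, and such a group has only finitely many $\FFF_q$-points. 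Being a finite subgroup of $\Aut(\cC)$ with $\KKK$ purely transcendental over $\FFF_q$, the group $K$ falls under Lemma~\ref{lemma:ADE}, which provides a characteristic abelian subgroup $A\le K$ of index at most $J=\max\{q(q^2-1),\,60\}$.

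It remains to assemble the two estimates and to promote $A$ to a normal subgroup of the whole group. By construction $K$ is normal in $\Aut(S;\phi)$, and $A$ is characteristic in $K$, so $A$ is normal in $\Aut(S;\phi)$; this is precisely why Lemma~\ref{lemma:ADE} is stated for a \emph{characteristic}, rather than merely normal, abelian subgroup. Then
$$
[\Aut(S;\phi):A]=[\Aut(S;\phi):K]\cdot[K:A]\le q(q^2-1)\cdot\max\{q(q^2-1),\,60\}=J_{\mathrm{CB}},
$$
as required. The step deserving the most care is the control of $K$: one must genuinely check that $K$ is \emph{finite} (the target $\Aut(\cC)\subset\PGL_2(\bar{\KKK})$ is infinite, and Corollary~\ref{corollary:PGL2-Fqt-Jordan} only bounds finite subgroups, so finiteness of $K$ is essential for Lemma~\ref{lemma:ADE} to yield a finite-index abelian subgroup of the whole group). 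Once this and the embedding $K\hookrightarrow\Aut(\cC)$ are in place, the characteristic-subgroup observation transports normality to $\Aut(S;\phi)$ and the index bound is immediate.
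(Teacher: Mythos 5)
Your proof is correct and follows essentially the same route as the paper: the same exact sequence $1\to\Aut_\phi(S)\to\Aut(S;\phi)\to\Aut(\PP^1)$, the same bound $q(q^2-1)$ for the image in $\Aut(\PP^1)\cong\PGL_2(\FFF_q)$, the same identification of the kernel with a subgroup of $\Aut(\cC)$ for the generic fiber $\cC$ over $\FFF_q(t)$, and the same combination of Lemma~\ref{lemma:ADE} with the fact that a characteristic subgroup of a normal subgroup is normal. The only difference is that you explicitly verify finiteness of the kernel (which is indeed needed to invoke Lemma~\ref{lemma:ADE}, stated for finite subgroups, and is left implicit in the paper); your justification of this point is somewhat informal --- the relative automorphism group scheme is a form of $\PGL_2$ only over the locus of smooth fibers, and passing from affineness over the proper base $\PP^1$ to a linear algebraic group over $\FFF_q$ requires a representability argument --- but the conclusion is correct and the idea (affine group scheme over a proper base has only finitely many sections over a finite field) is sound.
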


\begin{proof}
The group $\Aut(S;\phi)$ fits into the exact sequence
$$
1\longrightarrow \Aut_\phi(S)\longrightarrow \Aut(S;\phi)\longrightarrow \Aut(\PP^1),
$$
where $\Aut_\phi(S)$ is the subgroup that consists of all elements of $\Aut(S;\phi)$
mapping every fiber of $\phi$ to itself.
The index of $\Aut_\phi(S)$ in $\Aut(S;\phi)$ does not exceed
$$
|\Aut(\PP^1)|=|\PGL_2(\FFF_q)|=q(q^2-1).
$$
On the other hand, $\Aut_\phi(S)$ is a subgroup of the automoprhism group
of the scheme-theoretic generic fiber $\cC$ of $\phi$. Observe that $\cC$ is a conic over the field $\FFF_q(\PP^1)\cong\FFF_q(t)$, where $t$ is an independent variable.
Hence, by Lemma~\ref{lemma:ADE} the group $\Aut_\phi(S)$ contains a characteristic abelian subgroup $H$ of index at most
$$
J=\max\{q(q^2-1), 60\}.
$$
Therefore, $H$ is a normal abelian subgroup of $\Aut(S;\phi)$, and its index in $\Aut(S;\phi)$
does not exceed $J_{\mathrm{CB}}=q(q^2-1)\cdot J$.
\end{proof}

Now we are ready to prove Theorem~\ref{theorem:main}.

\begin{proof}[Proof of Theorem~\textup{\ref{theorem:main}}]
Let $G$ be a finite subgroup of $\Cr_2(\FFF_q)$.
Regularizing the birational action of~$G$ on $\PP^2$ and
taking a $G$-equivariant resolution
of singularities, we obtain a smooth projective rational surface $S'$ over $\FFF_q$ with an action of $G$,
see e.g. \cite[Lemma~14.1.1]{P:G-MMP} or~\mbox{\cite[Lemma~3.6]{ChenShramov}}.
Running a $G$-Minimal Model Program on $S'$, we arrive to a rational $G$-minimal surface $S$.
Thus, $S$ is either a del Pezzo surface, or it has a structure of a $G$-equivariant conic bundle, see~\mbox{\cite[Theorem~1G]{Iskovskikh80}
or~\cite[Theorem~2.7]{Mori} (cf. also~\mbox{\cite[Corollary~7.3]{Badescu}}).}
Since~$S$ is rational, it has an $\FFF_q$-point,
see for instance~\mbox{\cite[Lemma~1.1]{VA}}.
If~$S$ is a del Pezzo surface, then we know from Proposition~\ref{proposition:dP} that $G$ contains a normal abelian subgroup of index at most
$$
J_{\mathrm{dP}}=\left\{
\begin{array}{ll}
q^3(q^2-1)(q^3-1),& \text{\ if $q$ is odd},\\
\max\{q^3(q^2-1)(q^3-1), |\WE|\},& \text{\ if $q$ is even}.
\end{array}
\right.
$$

Suppose that $S$ has a structure $\phi\colon S\to C$ of a $G$-equivariant conic bundle. Then $C$ is a smooth geometrically rational curve with an $\FFF_q$-point, which means
that $C\cong\PP^1$. Therefore, we know from Lemma~\ref{lemma:CB}
that $G$ contains a normal abelian subgroup of index at most
$$
J_{\mathrm{CB}}=q(q^2-1)\max\{q(q^2-1), 60\}.
$$

It remains to notice that
$$
\max\{J_{\mathrm{dP}},J_{\mathrm{CB}}\}=J_{\mathrm{dP}}.
$$
Note that one has $J_{\mathrm{dP}}=q^3(q^2-1)(q^3-1)$ unless $q\in\{2,4,8\}$. Moreover, for~\mbox{$q\not\in\{2,4,8\}$} this bound for the indices of normal abelian
subgroups in finite subgroups of $\Cr_2(\FFF_q)$ is sharp by Example~\ref{example:P2}.
\end{proof}

\end{document}